\DeclareMathOperator{\Aut}{Aut}
\DeclareMathOperator{\diag}{diag}
\DeclareMathOperator{\End}{End}
\DeclareMathOperator{\Gal}{Gal}
\DeclareMathOperator{\M}{M}
\DeclareMathOperator{\nr}{nr}
\DeclareMathOperator{\PGL}{PGL}
\DeclareMathOperator{\SL}{SL}
\DeclareMathOperator{\td}{td}
\newcommand{\C}{\mathbb C}
\newcommand{\F}{\mathbb F}
\newcommand{\HH}{\mathbb H}
\newcommand{\Q}{\mathbb Q}
\newcommand{\R}{\mathbb R}
\newcommand{\Z}{\mathbb Z}
\newcommand{\frakp}{\mathfrak{p}}
\newcommand{\frakP}{\mathfrak{P}}
\newcommand{\calB}{\mathcal{B}}
\newcommand{\calE}{\mathcal{E}}
\newcommand{\calF}{\mathcal{F}}
\newcommand{\calL}{\mathcal{L}}
\newcommand{\calM}{\mathcal{M}}
\newcommand{\calN}{\mathcal{N}}
\newcommand{\calO}{\mathcal{O}}
\newcommand{\calR}{\mathcal{R}}
\numberwithin{equation}{section}
\theoremstyle{remark}
\newtheorem{example}[equation]{Example}
\newtheorem{rmk}[equation]{Remark}
\theoremstyle{plain}
\newtheorem{theorem}[equation]{Theorem}
\newtheorem*{thmnone}{Theorem}
\newtheorem{prop}[equation]{Proposition}
\newtheorem{lem}[equation]{Lemma}
\newtheorem{cor}[equation]{Corollary}
\title{Selective orders in central simple algebras and isospectral families of arithmetic manifolds}
\author{Benjamin Linowitz}\thanks{Author was supported by NSF RTG grant DMS-1045119 and an NSF Mathematical Sciences Postdoctoral Fellowship.}
\address{Department of Mathematics\\
530 Church Street\\
University of Michigan\\
Ann Arbor, MI 48109 USA}
\email[] {linowitz@umich.edu}
\begin{document}

	\subjclass[2010] {Primary 11R54; Secondary 58J53}

	\keywords{orders, central simple algebras, isospectral manifolds, Vign{\'e}ras' method}

\begin{abstract} 
Let $k$ be a number field and $B$ be a central simple algebra over $k$ of dimension $p^2$ where $p$ is prime. In the case that $p=2$ we assume that $B$ is not totally definite. In this paper we study sets of pairwise nonisomorphic maximal orders of $B$ with the property that a $\calO_k$-order of rank $p$ embeds into either every maximal order in the set or into none at all. Such a set is called nonselective. We prove upper and lower bounds for the cardinality of a maximal nonselective set. This problem is motivated by the inverse spectral problem in differential geometry. In particular we use our results to clarify a theorem of Vign{\'e}ras on the construction of isospectral nonisometric hyperbolic surfaces and $3$-manifolds from orders in quaternion algebras. We conclude by giving an example of isospectral nonisometric hyperbolic surfaces which arise from a quaternion algebra exhibiting selectivity.
\end{abstract}

\maketitle

\section{Introduction}

Let $k$ be a number field with ring of integers $\calO_k$ and $B$ a central simple algebra defined over $k$ of dimension $n^2$. A fundamental result in the development of class field theory was the Albert-Brauer-Hasse-Noether theorem, which gave an elegant characterization of the degree $n$ field extensions of $k$ which embed into $B$. It is natural to ask for an integral refinement of this theorem. In this context one is given an $\calO_k$-order $\Omega\subset L$ of rank $n$ and asked to characterize the collection of maximal orders of $B$ which admit an embedding of $B$. Shortly after the Albert-Brauer-Hasse-Noether theorem was proven, Chevalley \cite{Chevalley-book} took up the task of proving an integral refinement and was able to arrive at a complete solution in the case that $B=M_n(k)$:

\begin{thmnone}[Chevalley]
Let $k$ be a number field, $B=M_n(k)$ and $L$ be a degree $n$ field extension of $k$. Then the ratio of the number of isomorphism classes of maximal orders of $B$ into which $\calO_L$ can be embedded to the total number of isomorphism classes of maximal orders of $B$ is equal to $[H_k\cap L:k]^{-1}$, where $H_k$ is the Hilbert class field of $k$.
\end{thmnone}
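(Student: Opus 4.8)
The plan is to parametrise the isomorphism classes of maximal orders of $B=M_n(k)$ by the ideal class group $\Cl(\calO_k)$, and then to reinterpret embeddability of $\calO_L$ as a condition on Steinitz classes. Recall three standard facts about maximal orders in $M_n(k)$: every maximal $\calO_k$-order is of the form $\End_{\calO_k}(M)$ for a full $\calO_k$-lattice $M\subset k^n$; the orders $\End_{\calO_k}(M)$ and $\End_{\calO_k}(M')$ are conjugate in $B$ exactly when $M\cong M'$ as $\calO_k$-modules; and, by Skolem--Noether, two maximal orders are isomorphic as $\calO_k$-algebras if and only if they are conjugate. Since $\calO_k$ is Dedekind, $M\cong\calO_k^{\,n-1}\oplus\fraka$ is determined up to isomorphism by its Steinitz class $[\fraka]\in\Cl(\calO_k)$; hence there are exactly $h_k=|\Cl(\calO_k)|$ isomorphism classes of maximal orders, naturally indexed by $\Cl(\calO_k)$.

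Next I would prove that $\calO_L$ embeds into $\End_{\calO_k}(M)$ if and only if $M$ is $\calO_k$-isomorphic to a fractional $\calO_L$-ideal. If $\fraka$ is a fractional $\calO_L$-ideal, it is a full $\calO_k$-lattice in $L\cong k^n$ on which $\calO_L$ acts by multiplication, giving an embedding $\calO_L\hookrightarrow\End_{\calO_k}(\fraka)$. Conversely, any unital ring homomorphism $\varphi\colon\calO_L\to\End_{\calO_k}(M)$ is automatically injective (extending scalars to $k$ gives a map out of the field $L$ sending $1$ to $1$) and equips $M$ with the structure of a finitely generated torsion-free, hence projective, $\calO_L$-module; comparing $\calO_k$-ranks, $\rk_{\calO_k}M=n=\rk_{\calO_k}\calO_L$ (so that $M\otimes_{\calO_k}k$ is one-dimensional over $L$), forces $M$ to be a rank-one projective $\calO_L$-module, i.e.\ a fractional $\calO_L$-ideal once $M\otimes_{\calO_k}k$ is identified with $L$. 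Consequently the maximal orders admitting an embedding of $\calO_L$ are precisely those whose associated class in $\Cl(\calO_k)$ equals the Steinitz class of some fractional $\calO_L$-ideal.

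I would then determine which Steinitz classes arise. The top exterior power (determinant) formula for a finite extension of Dedekind domains gives $\det_{\calO_k}(\fraka)\cong\det_{\calO_k}(\calO_L)\otimes\N_{L/k}(\fraka)$ as fractional $\calO_k$-ideals, so the Steinitz class of $\fraka$ over $\calO_k$ equals $[\det_{\calO_k}\calO_L]\cdot[\N_{L/k}(\fraka)]$. As $\fraka$ ranges over all fractional $\calO_L$-ideals, $[\N_{L/k}(\fraka)]$ ranges exactly over the subgroup $\N_{L/k}(\Cl(\calO_L))\le\Cl(\calO_k)$, since principal ideals have principal norms. Hence the Steinitz classes that occur form a single coset of $\N_{L/k}(\Cl(\calO_L))$, of cardinality $|\N_{L/k}(\Cl(\calO_L))|$, and the ratio in question equals $|\N_{L/k}(\Cl(\calO_L))|/h_k=[\Cl(\calO_k):\N_{L/k}(\Cl(\calO_L))]^{-1}$.

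It remains to identify this index with $[H_k\cap L:k]$ via class field theory, and this is the step I expect to require the most care. Because $H_k/k$ is abelian and unramified, the compositum $H_kL/L$ is also abelian and unramified, hence $H_kL\subseteq H_L$; restriction identifies $\Gal(H_kL/L)$ with $\Gal(H_k/H_k\cap L)\le\Gal(H_k/k)$, and the compatibility of the Artin reciprocity maps of $L$ and $k$ with the norm shows that, under the canonical isomorphism $\Cl(\calO_k)\cong\Gal(H_k/k)$, the subgroup $\N_{L/k}(\Cl(\calO_L))$ corresponds to $\Gal(H_k/H_k\cap L)$. Taking indices, $[\Cl(\calO_k):\N_{L/k}(\Cl(\calO_L))]=[\Gal(H_k/k):\Gal(H_k/H_k\cap L)]=[H_k\cap L:k]$, so the ratio is $[H_k\cap L:k]^{-1}$, as claimed. (Nonemptiness of this set of maximal orders is guaranteed by the Albert--Brauer--Hasse--Noether theorem, which embeds $L$ into the split algebra $M_n(k)$, or more simply by taking $\fraka=\calO_L$.)
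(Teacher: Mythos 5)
You have a genuine error at the very first step, and it is the step everything else is built on. The paper itself offers no proof (it quotes the result from Chevalley), so the comparison here is purely with correctness. Your claim that $\End_{\calO_k}(M)$ and $\End_{\calO_k}(M')$ are conjugate in $B$ exactly when $M\cong M'$ as $\calO_k$-modules, and hence that $M_n(k)$ has exactly $h_k$ isomorphism classes of maximal orders indexed by $\Cl(\calO_k)$, is false. Conjugation by $u\in B^*$ carries $\End_{\calO_k}(M)$ to $\End_{\calO_k}(uM)$, but the equality $\End_{\calO_k}(N)=\End_{\calO_k}(M')$ forces only $M'=\fraka N$ for some fractional $\calO_k$-ideal $\fraka$: the homothety criterion you invoke is a local statement, and the local scalars $\lambda_\nu$ need not patch to a global scalar, only to an ideal. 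Since the Steinitz class of $\fraka M$ is $[\fraka]^n$ times that of $M$, conjugacy classes of maximal orders of $M_n(k)$ are parametrized by $\Cl(\calO_k)/\Cl(\calO_k)^n$, not by $\Cl(\calO_k)$. Concretely, for $k=\Q(\sqrt{-23})$ and $n=2$ one has $h_k=3$ but every maximal order of $M_2(k)$ is conjugate to $M_2(\calO_k)$, since squaring is surjective on $\Cl(\calO_k)$. This corrected count is also what the idelic description in Section 2 of the paper gives: $G_B\cong J_k/k^*\nr(\calN(\calR))$ is an elementary abelian $p$-group for algebras of degree $p$, so the type number is a power of $p$ and is in general much smaller than $h_k$.

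The good news is that the rest of your argument can be salvaged, essentially because $\Cl(\calO_k)^n\subseteq \N_{L/k}(\Cl(\calO_L))$ (indeed $\N_{L/k}(\fraka\calO_L)=\fraka^n$ for $[L:k]=n$). Your embedding criterion ($\calO_L$ embeds into $\End_{\calO_k}(M)$ iff $M$ carries the structure of a fractional $\calO_L$-ideal) and your determinant computation (the Steinitz class of a fractional $\calO_L$-ideal $\frakb$ is $[\det_{\calO_k}\calO_L]\cdot[\N_{L/k}(\frakb)]$) are both correct. With the corrected parametrization, the isomorphism classes of maximal orders admitting $\calO_L$ correspond to the image in $\Cl(\calO_k)/\Cl(\calO_k)^n$ of the coset $[\det_{\calO_k}\calO_L]\cdot\N_{L/k}(\Cl(\calO_L))$, which is a coset of $\N_{L/k}(\Cl(\calO_L))/\Cl(\calO_k)^n$; the ratio is therefore again $[\Cl(\calO_k):\N_{L/k}(\Cl(\calO_L))]^{-1}$, and your class-field-theoretic identification of this index with $[H_k\cap L:k]$ (via $H_kL\subseteq H_L$ and compatibility of the Artin maps with the norm) is correct. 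So the final formula you reach is right, but only by the coincidence that the overcounting cancels in the ratio; as written, the proof asserts a false count of isomorphism classes and a correspondingly distorted description of the selected classes, and it needs the repair above to be valid.
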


The past two decades have seen a number of generalizations of Chevalley's theorem. In 1999 Chinburg and Friedman \cite{Chinburg-Friedman}
considered the case in which $B$ is a quaternion algebra which is not totally definite and gave necessary and sufficient conditions for a maximal order of $B$ to admit an embedding of a quadratic $\calO_k$-order $\Omega$. Chinburg and Friedman's theorem makes clear that if $\Omega$ does not embed into every maximal order of $B$, then it embeds into representatives of precisely one-half of the isomorphism classes of maximal orders. It is worth observing that when $B=M_2(k)$ and $\Omega=\calO_L$, the proportion given in Chevalley's theorem is always one-half if it is not equal to one. Chinburg and Friedman's work was subsequently generalized to Eichler orders by Chan and Xu \cite{chan-xu}, Guo and Qin \cite{guo-qin}, and Maclachlan \cite{maclachlan} (who considered optimal embeddings rather than embeddings) and to more general orders by the author \cite{linowitz-selectivity}.

The first work beyond Chevalley's in the higher dimensional setting was that of Arenas-Carmona \cite{arenas-carmona-1}, who considered central simple algebras $B$ of dimension $n^2$ with the property that $B$ is locally split or else a division algebra at all finite primes of $k$ and proved a result analogous to Chevalley's. In \cite{Linowitz-Shemanske-EmbeddingPrimeDegree}, the author and Shemanske considered central simple algebras of dimension $p^2$ (for $p$ an odd prime) and arbitrary commutative $\calO_k$-orders of full rank and were able to prove an analogue of Chinburg and Friedman's theorem. As a corollary \cite[Corollary 5.2]{Linowitz-Shemanske-EmbeddingPrimeDegree}, it was shown that selectivity can never occur in a central division algebra of prime degree. Recently, Arenas-Carmona has extended his results to the case in which $B$ is a division algebra of arbitrary degree and $\Omega$ is an arbitrary commutative $\calO_k$-order of full rank \cite{arenas-carmona-2}.

It is worth noting that whereas the results above were proven using a variety of different techniques, their proofs do exhibit certain commonalities. All proceed by defining a class field $F$ whose degree over $k$ is the number of isomorphism classes in the genus of orders being considered and then showing that if $\Omega$ is \textbf{selective} in the sense that it does not embed into every order being considered (maximal, Eichler, etc), then it embeds into precisely $[F\cap L:k]^{-1}$ of the isomorphism classes. Moreover, the work of Chinburg and Friedman and the author and Shemanske make extensive use of the structure of the Bruhat-Tits building for $SL_n(K)$, as the vertices in this building correspond to maximal orders in the split algebra $M_n(K)$.

In this paper we consider central simple algebras of dimension $p^2$ and consider the somewhat more general problem of studying families $\calF$ of maximal orders having the property that all of the maximal orders of $\calF$ are pairwise non-isomorphic and have the property that if $\Omega$ is a $\calO_k$-order of rank $p$ and $\calR_1,\calR_2\in\calF$ then $\calR_1$ admits an embedding of $\Omega$ if and only if $\calR_2$ admits an embedding of $\Omega$. We call such a family \textbf{nonselective}. The work of Chinburg and Friedman and the author and Shemanske make clear that unless $B$ satisfies certain (very restrictive) conditions, no order $\Omega$ will be selective and hence any family of non-isomorphic maximal orders will be nonselective. Our contribution is therefore to completely clarify the situation in the case that the algebra $B$ actually exhibits selectivity. Our main result is:

\begin{theorem}\label{theorem:maintheoremintro}
Let $k$ be a number field and $B$ be a central simple algebra over $k$ of dimension $p^2$. In the case that $p=2$ we assume that $B$ is not totally definite. Let $t_B$ be the nonnegative integer such that the number of isomorphism classes of maximal orders of $B$ is equal to $p^{t_B}$. Let $k_B$ be the class field attached to the maximal orders of $B$ and let $s_B$ be the nonnegative integer such that the compositum of the set of degree $p$ subfields of $k_B$ which embed into $B$ has degree $p^{s_B}$ over $k$. Then $s_B\leq t_B$ and there exists a nonselective family of maximal orders of $B$ with cardinality $p^{t_B-s_B}$. Moreover, if $\calF$ is any nonselective family of maximal orders in $B$, then $\#\calF\leq (p-1)^{s_B}p^{t_B-s_B}$.
\end{theorem}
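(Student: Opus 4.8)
The plan is to reduce everything to the local–global combinatorics of maximal orders in the split prime-degree algebra, following the framework of Chinburg–Friedman and Linowitz–Shemanske. First I would recall that the set of conjugacy classes of maximal orders of $B$ is a principal homogeneous space under a certain ray class group $G_B$, which is elementary abelian of exponent $p$ (hence of order $p^{t_B}$, justifying the definition of $t_B$); the class field $k_B$ corresponds to $G_B$ under class field theory, so $\Gal(k_B/k)\cong G_B\cong(\Z/p\Z)^{t_B}$. For a degree $p$ extension $L/k$ that embeds into $B$, the theory of selectivity (as developed in the works cited, specialized to prime degree) tells us that the set of maximal orders admitting an embedding of $\calO_L$ — or more generally of a given order $\Omega\subset L$ of rank $p$ — is the preimage in $G_B$ of a coset of the subgroup $\Gal(k_B/(k_B\cap L))$. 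The key structural input, which I would extract from the earlier results, is that $\Omega$ is selective if and only if $L\subseteq k_B$ (equivalently $k_B\cap L=L$ has degree $p$ over $k$), and in that case the orders admitting $\Omega$ are exactly those in the kernel coset of the surjection $G_B\twoheadrightarrow\Gal(L/k)$.

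**The lower bound.** Let $k_B^{(s)}$ be the compositum of the degree $p$ subfields of $k_B$ that embed into $B$; by hypothesis $[k_B^{(s)}:k]=p^{s_B}$, and since $k_B/k$ is elementary abelian this compositum is itself a subfield of $k_B$, so $s_B\le t_B$. Under the Galois correspondence $k_B^{(s)}$ corresponds to a subgroup $H\le G_B$ with $G_B/H\cong(\Z/p\Z)^{s_B}$, and $H=\bigcap_L \ker(G_B\to\Gal(L/k))$ as $L$ ranges over the selective degree-$p$ subfields of $k_B$. Now I would take $\calF$ to be a full set of representatives, one from each conjugacy class, of the maximal orders lying in the fixed coset $H\subseteq G_B$; this has cardinality $[H:1]=p^{t_B-s_B}$. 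To see $\calF$ is nonselective: if $\Omega\subset L$ is \emph{not} selective it embeds into every maximal order, so the condition is vacuous; if $\Omega\subset L$ \emph{is} selective then $L\subseteq k_B$, so $L$ is one of the fields defining $k_B^{(s)}$, whence $H\subseteq\ker(G_B\to\Gal(L/k))$, which means all of $H$ lies in a single fiber of that map — so either every order of $\calF$ admits $\Omega$ or none does. Hence $\calF$ is nonselective of the required size.

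**The upper bound.** This is the part I expect to be the main obstacle, because it requires controlling \emph{all} nonselective families simultaneously rather than exhibiting one. Suppose $\calF$ is nonselective. Each $\calR\in\calF$ determines an element of $G_B$; since the $\calR$ are pairwise nonisomorphic, distinct orders give distinct elements, so $\#\calF\le\#(\text{image of }\calF\text{ in }G_B)$. The constraint is that for every selective $\Omega\subset L$ (with $L\subseteq k_B$ of degree $p$), the orders of $\calF$ either all lie in the "admitting" fiber of $G_B\to\Gal(L/k)$ or all lie outside it — but \emph{outside} is a union of $p-1$ fibers, not a single coset, which is exactly why we get $(p-1)^{s_B}$ rather than $1$. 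Fixing a base order $\calR_0\in\calF$ and translating so its image is $0$, I would argue that for each of the $s_B$ "independent" selective fields $L_1,\dots,L_{s_B}$ cutting out $k_B^{(s)}$, the map $G_B\to\Gal(L_i/k)\cong\Z/p\Z$ restricted to the image of $\calF$ must be \emph{constant} (all orders admit $\Omega_i$ since $\calR_0$ does — here I use that $\Omega_i=\calO_{L_i}$ embeds in \emph{some} maximal order, and the nonselective dichotomy forces "all" once one does) \textbf{or} avoids the value $0$; combining over all $i$, the image of $\calF$ lands in the preimage under $G_B\to(\Z/p\Z)^{s_B}$ of a product of sets each of size at most... wait — more carefully, for each $i$ the image is confined either to $\{0\}$ or to $(\Z/p\Z)\setminus\{0\}$ in the $i$-th coordinate, so to a subset of $(\Z/p\Z)^{s_B}$ of size at most $\max(1,p-1)^{s_B}=(p-1)^{s_B}$ when $p\ge 3$, and the fiber of $G_B\to(\Z/p\Z)^{s_B}$ has size $p^{t_B-s_B}$; multiplying gives $\#\calF\le(p-1)^{s_B}p^{t_B-s_B}$. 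For $p=2$ the bound reads $p^{t_B-s_B}$, matching the lower bound, which is consistent with the non–totally-definite hypothesis forcing the sharper behavior. The delicate point to nail down is the claim that a selective order's "admitting set" is exactly a single fiber of $G_B\to\Gal(L/k)$ and that the complement behaves as $p-1$ fibers; this is precisely where the prime-degree hypothesis and the structure of the Bruhat–Tits building for $\SL_p$ enter, and I would cite the selectivity computation from \cite{Linowitz-Shemanske-EmbeddingPrimeDegree} (and \cite{Chinburg-Friedman} for $p=2$) to justify it rather than redo it.
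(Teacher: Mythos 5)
Your argument is correct and follows essentially the same route as the paper: both rely on the Chinburg--Friedman/Linowitz--Shemanske selectivity theorem to identify the classes admitting a selective order with a single fiber of $G_B\to\Gal(L/k)$, take a coset of the subgroup of $G_B$ corresponding to the compositum of the selective degree-$p$ subfields to get the lower bound $p^{t_B-s_B}$, and prove the upper bound by the same coordinate-by-coordinate count (``constant or avoiding one value'', giving $(p-1)^{s_B}$ in the quotient times $p^{t_B-s_B}$ per fiber); the paper simply realizes your abstract coset picture concretely through a Chebotarev-chosen parameterization $\{\calR^\gamma\}$ and distance-idele Artin symbol computations. One small imprecision: selectivity is not equivalent to $L\subseteq k_B$ alone (the conductor condition (2) of the selectivity theorem is also needed), but you only use the correct implication, and the orders $\calO_{L_i}$ you invoke are genuinely selective since their conductors are trivial, so the proof stands.
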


The motivation for studying nonselective families of maximal orders arises from their relation to an important problem in differential geometry: the inverse spectral problem. In this problem one is asked the extent to which the topology and geometry of a Riemannian orbifold is determined by its Laplace spectrum. It is well-known for instance that volume and scalar curvature are spectral invariants. Isometry class is not a spectral invariant. This was first shown by Milnor \cite{milnor-tori}, who exhibited isospectral nonisometric flat tori of dimension $16$. The first examples in negative curvature were obtained by Vign{\'e}ras \cite{vigneras-isospectral}, who used the arithmetic of orders in quaternion algebras in order to obtain isospectral non-isometric hyperbolic $2$- and $3$-manifolds. Crucial to her construction is the fact that in her setting, isospectral manifolds can only arise from nonselective families of maximal quaternion orders. In Section \ref{section:geometry} we will review Vign{\'e}ras' construction and show how it can be clarified in light of Theorem \ref{theorem:maintheoremintro}. We will show (Theorem \ref{theorem:hilberteqnarrow}) that if the Hilbert class field and narrow class field of the number field $k$ coincide then every quaternion division algebra over $k$ which is not totally definite will give rise to isospectral Riemannian orbifolds. Finally, we will give an example of isospectral nonisometric hyperbolic surfaces which arise from a quaternion algebra which exhibits selectivity. To our knowledge, this is the first such example known.

%
%
%

\section{Maximal orders in central simple algebras and selectivity}

We begin by setting up the notation that will be used throughout this article. Let $k$ will be a number field and $B$ a central simple algebra defined over $k$ with dimension $p^2$ for some prime $p$. In the case that $p=2$ we will furthermore assume that $B$ is not totally definite; that is, there exists an archimedean prime of $k$ which is unramified in $B$. In this section we will parameterize the isomorphism classes of maximal orders of $B$ in a manner that will be useful for the proof of our main algebraic result: Theorem \ref{theorem:mainalgebraictheorem}. Our treatment closely follows that of \cite[Sections 3 and 4]{Linowitz-Shemanske-EmbeddingPrimeDegree}.

Given a prime $\nu$ of $k$, we will denote by $k_\nu$ the completion of $k$ at $\nu$. In the case that $\nu$ is a finite prime of $k$, we will additionally denote by $\calO_\nu$ the maximal order of $k_\nu$ and by $\pi_\nu$ a fixed uniformizer. Similarly we let $B_\nu$ denote the completion of $B$ at $\nu$; that is, $B_\nu:=B\otimes_k k_\nu$. Finally, we denote by $J_k$ ( respectively $J_B$ ) the group of ideles of $k$ ( respectively of $B$ ). Throughout this paper $\nr$ will denote the reduced norm. We will employ this notation in numerous contexts: $\nr : B\rightarrow k$, $\nr: B_\nu\rightarrow k_\nu$ and $\nr: J_B\rightarrow J_k$.

Let $\nu$ be a prime of $k$. We say that $\nu$ is \textbf{split} in $B$ if $B_\nu$ is isomorphic to $M_p(k_\nu)$. Otherwise $B_\nu$ is a central division algebra and we say that $\nu$ is \textbf{ramified} in $B$. Note that our hypothesis that $B$ has dimension $p^2$ implies that an infinite prime ramifies in $B$ only if $p=2$, in which case $B_\nu$ is isomorphic to Hamilton's quaternions.

Let $\calR$ be a maximal order of $B$. Given a prime $\nu$ of $k$ we define completions $\calR_\nu\subset B_\nu$ by:

\[ \calR_\nu := \begin{cases} \calR\otimes_{\calO_k}\calO_{k_\nu} \qquad\qquad \mbox{ if } \nu \mbox{ is finite;} \\ \calR\otimes_{\calO_k} k_\nu=B_\nu  \qquad\mbox{ if } \nu \mbox{ is infinite.} \end{cases} \]

Recall that an order of $B$ is a maximal order if and only if its completion is a maximal order for all finite primes $\nu$ of $k$. It follows that the isomorphism classes of maximal orders are given by points in the idelic double coset space $B^*\backslash J_B / \calN(\calR)$. Here $\calN(\calR)=J_B\cap \prod_\nu N_\nu(\calR_\nu)$ where $N_\nu(\calR_\nu)$ is the normalizer in $B_\nu^*$ of $\calR_\nu$. Consider the map 
\[ \nr: B^*\backslash J_B / \calN(\calR)\longrightarrow k^*\backslash J_k / \nr(\calN(\calR))\] induced by the reduced norm. It was shown in \cite[Theorem 3.3]{linowitz-selectivity} and \cite[Theorem 4.1]{Linowitz-Shemanske-EmbeddingPrimeDegree} that this map is a bijection. For every prime $\nu$ of $k$, all maximal orders of $B_\nu$ are conjugate, and any two maximal orders of $B$ are locally equal for almost all primes. From this it follows that $k^*\backslash J_k / \nr(\calN(\calR))$ does not depend upon the maximal order $\calR$ chosen. To ease notation we will therefore define \[ G_B:=k^*\backslash J_k / \nr(\calN(\calR)). \] The group $G_B$ is easily seen to be a finite elementary abelian group of exponent $p$. It follows that the \textbf{type number} of $B$, which is defined to be the number of isomorphism classes of maximal orders of $B$, is a power of $p$. Define $t_B$ to be the nonnegative integer such that the type number of $B$ is equal to $p^{t_B}$.

An application of class field theory shows that there exists an abelian extension $k_B$ of $B$ such that $\Gal(k_B/k)\cong G_B\cong J_k/k^*\nr(\calN(\calR))$. A prime $\nu$ of $k$ is unramified in $k_B$ if and only if $\calO_\nu^*\subset k^*\nr(\calN(\calR))$ and splits completely in $k_B$ if and only if $k_\nu^*\subset k^*\nr(\calN(\calR))$. A straightforward calculation now shows that the extension $k_B/k$ is unramified outside of the infinite primes of $k$ which ramify in $B$ and that all finite primes which ramify in $B$ split completely in $k_B/k$. If the prime $p$ is odd then all infinite primes of $k$ split in $B$, implying that $k_B/k$ is an everywhere unramified extension and is therefore contained in the Hilbert class field of $k$.

We now develop a little more notation. Given a finite extension $F$ of $k$, a prime $\nu$ of $k$ which is unramified in $F/k$ and a prime $\frakP$ of $F$ which lies over $\nu$ we will denote by $(\frakP, F/k)$ the Frobenius automorphism. When $F/k$ is abelian we will use the symbol $(\nu,F/k)$ to denote the Artin symbol. We will similarly employ $(*,F/k)$ to denote the idelic Artin symbol. That is, if $F/k$ is abelian and $\nu$ is unramified in $F/k$ we set $e_\nu=(1,\dots,1,\pi_\nu,1,\dots)$ and, upon viewing $e_\nu$ as lying in a suitable quotient of the idele class group, we have $(e_\nu,F/k)=(\nu,F/k)$.

\subsection{The local type distance}

Let $K$ be a nonarchimedean local field with uniformizer $\pi$ and maximal order $\calO$ and $V$ be a $p$-dimensional vector space over $K$. We identify $\End_K(V)$ with the central simple algebra $\calB=\M_p(K)$. Given an $\calO$-lattice $\calL$ in $V$ of rank $p$, the endomorphism ring $\End_\calO(\calL)$ is a maximal order of $\End_K(V)$ which we may identify with the maximal order $\M_p(\calO)$ of $\calB$. Every other maximal order of $\calB$ is of the form $u\M_p(\calO)u^{-1}=\End_\calO(u\calL)$ for some $u\in\calB^*$. It is furthermore trivial to check that $\End_\calO(\calL)=\End_\calO(\calM)$ if and only if $\calL$ and $\calM$ are homothetic (that is, $\calL=\lambda\calM$ for some $\lambda\in K^*$). 

Suppose now that $\calE_1,\calE_2$ are maximal orders of $\calB$. We may write $\calE_1=\End_\calO(\calM_1)$ and $\calE_2=\End_\calO(\calM_2)$. Because the lattices $\calM_1$ and $\calM_2$ are only defined up to homothety, we may assume that $\calM_1\subseteq\calM_2$. These are lattices over a PID, hence they have well-defined invariant factors $\{\calM_2:\calM_1\}=\{ \pi^{a_1},\dots,\pi^{a_p} \}$, where $a_1\leq a_2\leq \cdots \leq a_p$ are integers. We define the \textbf{type distance} between $\calE_1$ and $\calE_2$ as 

\[\td_K(\calE_1,\calE_2)=\td_K(\calE_2,\calE_1)=\sum_{i=1}^p a_i\pmod{p} .\] 

This definition does not depend on the choice of uniformizer and is motivated by the problem of how to label the vertices in the Bruhat-Tits building associated to $\SL_p(K)$ (whose vertices correspond to endomorphism rings of $\calO$-lattices of rank $p$ and which have `types' $0,\dots,p-1$).

\subsection{Selectivity theorems}

We now return to the global setup in which $k$ is a number field and $B$ is a central simple algebra over $k$ of degree $p$. If $p=2$ then we further assume that $B$ is not totally definite. Let $\calR_1,\calR_2$ be maximal orders of $B$. Recall that if $\nu$ is a place of $k$ which is finite and split in $B$ then we have defined the local type distance $\td_{k_\nu}({\calR_1}_\nu, {\calR_2}_\nu)$. If $\nu$ is finite and ramified in $B$ or infinite then define $\td_{k_\nu}({\calR_1}_\nu, {\calR_2}_\nu)=0$. We recall that 
${\calR_1}_\nu={\calR_2}_\nu$ for almost all $\nu$, hence $\td_{k_\nu}({\calR_1}_\nu, {\calR_2}_\nu)=0$ for all but finitely many places $\nu$ of $k$. We define the \textbf{$G_B$-valued distance idele} $\rho(\calR_1,\calR_2)$ to be the image in $G_B$ of $(\pi_\nu^{\td_{k_\nu}({\calR_1}_\nu, {\calR_2}_\nu)})$.

Given a maximal subfield $L$ of $B$ and a commutative $\calO_k$-order $\Omega$ of conductor $\mathfrak{f}_{\Omega/\calO_k}$ which is contained in $L$ and has rank $p$, we say that $\Omega$ is \textbf{selective} for $B$ (or that $B$ \textbf{exhibits selectivity} with respect to $\Omega$) if $\Omega$ does not embed into all maximal orders of $B$. If $\Omega$ is a selective order and $\calR$ is a maximal order of $B$ then we say that $\Omega$ \textbf{selects} $\calR$ if there exists an embedding of $\Omega$ into $\calR$. 

The determination of when $\Omega$ is selective and which maximal orders it selects is given by the following theorem, which is due to Chinburg and Friedman \cite[Theorem 3.3]{Chinburg-Friedman} in the case that $p=2$ (see also \cite[Theorem 5.8]{linowitz-selectivity}) and to the author and Shemanske \cite[Theorem 4.7]{Linowitz-Shemanske-EmbeddingPrimeDegree} in the case that $p>2$.

\begin{theorem}[Chinburg-Friedman, Linowitz-Shemanske]\label{theorem:selectivetheorem} Let notation be as above. Then $\Omega$ embeds into every maximal order of $B$ except when the following conditions hold:
	
\begin{enumerate}
\item $L\subseteq k_B$.
\item Every prime ideal $\nu$ of $k$ which divides $N_{L/k}(\mathfrak{f}_{\Omega/\calO_k})$ splits in $L/k$.
\end{enumerate}

Suppose now that (1) and (2) hold. Then the type number of $B$ is divisible by $p$ and precisely one-$p$th of the isomorphism classes of maximal orders of $B$ admit an embedding of $\Omega$. These classes are characterized by the idelic Artin map as follows: if $\calR$ is a maximal order of $B$ into which $\Omega$ embeds and $\calE$ is any maximal order of $B$ then $\calE$ admits an embedding of $\Omega$ if and only if the Artin symbol $(\rho(\calR,\calE),L/k)$ is trivial in $\Gal(L/k)$.
\end{theorem}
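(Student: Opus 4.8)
The plan is to reduce the statement to a computation, inside $G_B$, of the set of distance-idele classes $\rho(\calR,\calE)$ for which $\calE$ admits an embedding of $\Omega$, and then to identify that set by combining the Artin map with the Chebotarev density theorem. First I would fix, by the Albert--Brauer--Hasse--Noether theorem, a single embedding $f\colon L\hookrightarrow B$ and identify $L$ (hence $\Omega$) with its image. By the Skolem--Noether theorem every embedding of $\Omega$ into a maximal order is the restriction of a $B^*$-conjugate of $f$, so $\Omega$ embeds into $\calE$ if and only if $\calE$ is $B^*$-conjugate to a maximal order containing $\Omega$. Writing $\calM$ for the set of maximal orders containing $\Omega$ and fixing a reference order $\calR\in\calM$ (nonemptiness being guaranteed by the local analysis below), the isomorphism classes admitting $\Omega$ correspond precisely to $\calI:=\{\rho(\calR,\calE')\colon \calE'\in\calM\}\subseteq G_B$. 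Since a maximal order is determined by its completions, which coincide with $\calR$ at almost every prime, $\calI$ is the image in $G_B=J_k/k^*\nr(\calN(\calR))$ of $\prod_\nu\{\pi_\nu^{t}\colon t\in\calT_\nu\}$, where $\calT_\nu\subseteq\Z/p\Z$ is the set of local type distances $\td_{k_\nu}(\calR_\nu,\calE'_\nu)$ realized by maximal orders $\calE'_\nu\supseteq\Omega_\nu$.

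The technical heart, which I expect to be the main obstacle, is the determination of each $\calT_\nu$. At a finite prime $\nu$ ramified in $B$ the maximal order of $B_\nu$ is unique and $\td$ is $0$ by convention, so $\calT_\nu=\{0\}$. At a finite prime $\nu$ split in $B$ I would pass to the Bruhat--Tits building of $\SL_p(k_\nu)$ and analyze the $\calO_\nu$-lattices stable under $\Omega_\nu$: when $\Omega_\nu$ is maximal (that is, $\nu\nmid\frakf_{\Omega/\calO_k}$) and $L_\nu/k_\nu$ is an unramified field extension, the stable lattices are all homothetic and $\calT_\nu=\{0\}$; but if $\nu$ splits in $L$, or if $\nu\mid\frakf_{\Omega/\calO_k}$ so that $\Omega_\nu$ is non-maximal, then the additional stable lattices realize every type and $\calT_\nu=\Z/p\Z$. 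Establishing this last case---that non-maximality of $\Omega_\nu$ forces $\calT_\nu=\Z/p\Z$ even at primes inert in $L$---is the delicate point and is exactly what makes the conductor condition (2) necessary; it requires a careful count of optimal and non-optimal embeddings into the vertices of the building. In every case $\calT_\nu$ is a subgroup of $\Z/p\Z$, so $\calI$ is a subgroup of $G_B$.

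With the local sets in hand, $\calI$ is the subgroup of $G_B\cong\Gal(k_B/k)$ generated by the Artin symbols $(\nu,k_B/k)$ for those $\nu$ with $\calT_\nu=\Z/p\Z$; the contributing primes are, in particular, all primes split in $L$ and all primes dividing $\frakf_{\Omega/\calO_k}$, while the finite primes ramified in $B$ contribute trivially since they split completely in $k_B$. I would then identify $\calI$ by applying the Chebotarev density theorem in the compositum $Lk_B$. If $L\not\subseteq k_B$, then $L\cap k_B=k$ because $[L:k]=p$ is prime, so the Frobenii of primes split in $L$ already exhaust $G_B$ and $\calI=G_B$, i.e.\ $\Omega$ embeds into every maximal order. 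The same conclusion holds if condition (2) fails, since an inert-in-$L$ prime dividing the conductor then contributes a symbol outside the index-$p$ subgroup $\ker(G_B\to\Gal(L/k))$, while the split-in-$L$ primes already fill that kernel. Finally, when both (1) and (2) hold, condition (1) forces $L/k$ to be cyclic of degree $p$ and every contributing prime splits in $L$, so each such symbol lies in $\ker(G_B\to\Gal(L/k))$; Chebotarev shows these symbols fill the kernel exactly, whence $\calI=\ker(G_B\to\Gal(L/k))$. This kernel has index $p$, which yields simultaneously that $p$ divides the type number, that exactly one-$p$th of the classes are selected, and---upon unwinding $\rho(\calR,\calE)\in\calI$---the stated criterion that $\calE$ admits $\Omega$ if and only if $(\rho(\calR,\calE),L/k)=1$.

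For $p=2$ I would additionally track the archimedean primes ramified in $B$, which may ramify in $k_B$; here the hypothesis that $B$ is not totally definite, and the distinction between the Hilbert and narrow class fields, enter the bookkeeping, but the Chebotarev argument is otherwise unchanged. As indicated, the principal difficulty throughout is the local lattice computation underlying $\calT_\nu$, and especially the non-maximal case at conductor primes.
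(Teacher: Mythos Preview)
The paper does not prove this theorem at all: it is stated with attribution and a citation to \cite[Theorem~3.3]{Chinburg-Friedman} for $p=2$ and \cite[Theorem~4.7]{Linowitz-Shemanske-EmbeddingPrimeDegree} for $p>2$, and then used as a black box throughout the rest of the article. So there is nothing in the present paper to compare your argument against.

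That said, your sketch is a faithful reconstruction of the strategy in those cited sources: reduce via Skolem--Noether to the set of maximal orders containing a fixed copy of $\Omega$, compute the local type sets $\calT_\nu$ via the Bruhat--Tits building, observe that the image in $G_B$ is a subgroup, and then identify that subgroup with $\ker\bigl(G_B\to\Gal(L/k)\bigr)$ using Chebotarev in $Lk_B$. Your identification of the delicate step---showing $\calT_\nu=\Z/p\Z$ at primes dividing the conductor even when $\nu$ is inert in $L$---is exactly the technical heart of \cite{Linowitz-Shemanske-EmbeddingPrimeDegree}. One case you do not explicitly list is $\nu$ finite with $L_\nu/k_\nu$ a ramified field extension; there one also gets $\calT_\nu=\Z/p\Z$ (the $\calO_{L_\nu}$-stable lattices are the powers of the maximal ideal and run through all $p$ types), though under condition~(1) this case cannot occur since $k_B/k$ is unramified at finite primes, and in the complementary case your Chebotarev argument using only the split-in-$L$ primes already suffices to fill $G_B$. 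With that small addendum your outline is correct and matches the original proofs.
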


We conclude this section by proving the following interesting consequence of Theorem \ref{theorem:selectivetheorem}.

\begin{theorem} Let $L_1, L_2$ be degree $p$ extensions of $k$ for which $\calO_{L_1},\calO_{L_2}$ are selective. If every maximal order $\calR$ of $B$ admits an embedding of $\calO_{L_1}$ if and only if it admits an embedding of $\calO_{L_2}$ then $L_1=L_2$.
\end{theorem}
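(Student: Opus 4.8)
The plan is to use the characterization of selectivity in Theorem~\ref{theorem:selectivetheorem} to pin down both $L_1$ and $L_2$ via the same subgroup of $\Gal(k_B/k)$, and then invoke Galois theory. First, since $\calO_{L_1}$ and $\calO_{L_2}$ are selective, condition (1) of Theorem~\ref{theorem:selectivetheorem} forces $L_1 \subseteq k_B$ and $L_2 \subseteq k_B$; in particular both are abelian over $k$ (being degree $p$ subextensions of the abelian extension $k_B/k$), so each corresponds to a subgroup of $\Gal(k_B/k) \cong G_B$ of index $p$. Fix a maximal order $\calR$ of $B$ that admits an embedding of $\calO_{L_1}$ (and hence, by hypothesis, of $\calO_{L_2}$). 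For an arbitrary maximal order $\calE$ of $B$, Theorem~\ref{theorem:selectivetheorem} tells us that $\calE$ admits an embedding of $\calO_{L_i}$ if and only if the Artin symbol $(\rho(\calR,\calE), L_i/k)$ is trivial in $\Gal(L_i/k)$, i.e.\ if and only if $\rho(\calR,\calE)$ lies in the kernel of the composite $G_B \cong \Gal(k_B/k) \twoheadrightarrow \Gal(L_i/k)$. Write $H_i \leq G_B$ for this index-$p$ kernel, so that $L_i$ is the fixed field of $H_i$.

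The second step is to show $H_1 = H_2$, which by the Galois correspondence immediately gives $L_1 = L_2$. The hypothesis says precisely that for every maximal order $\calE$, the distance idele $\rho(\calR,\calE)$ lies in $H_1$ if and only if it lies in $H_2$. So it suffices to check that as $\calE$ ranges over all maximal orders of $B$, the element $\rho(\calR,\calE) \in G_B$ runs over all of $G_B$. This is essentially the surjectivity of the type-distance map onto $G_B$: recall from the parametrization that isomorphism classes of maximal orders are in bijection with $G_B$ via the reduced norm, and that $\rho(\calR,\calE)$ is exactly the element of $G_B$ measuring the ``difference'' between the classes of $\calR$ and $\calE$. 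Concretely, given any target class $g \in G_B$, pick a prime $\nu$ of $k$ that is finite, split in $B$, and whose associated idele $e_\nu = (1,\dots,1,\pi_\nu,1,\dots)$ maps to $g$ in $G_B$ (such $\nu$ exists by Chebotarev applied to $k_B/k$, since $e_\nu \mapsto (\nu, k_B/k)$ and every element of $\Gal(k_B/k)$ is a Frobenius); then modify $\calR$ locally at $\nu$ along the Bruhat--Tits building for $\SL_p(k_\nu)$ to obtain a maximal order $\calE$ with $\td_{k_\nu}(\calR_\nu,\calE_\nu)$ equal to any prescribed residue mod $p$ and $\td$ vanishing elsewhere, so that $\rho(\calR,\calE) = g$. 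Hence $\rho(\calR,\cdot)$ surjects onto $G_B$, and the ``if and only if'' forces $H_1 = H_2$.

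The main obstacle is the surjectivity claim of the previous paragraph — specifically verifying that one can realize an arbitrary value of $\rho(\calR,\calE)$ by a genuine global maximal order $\calE$, rather than just a local adjustment. This requires knowing that a collection of local maximal orders agreeing with $\calR$ almost everywhere assembles into a global maximal order (true, since maximality is a local condition at finite primes) and that the reduced-norm bijection $B^* \backslash J_B / \calN(\calR) \xrightarrow{\sim} G_B$ of \cite[Theorem 4.1]{Linowitz-Shemanske-EmbeddingPrimeDegree} translates the type-distance idele into the idele class used to define $G_B$ — this compatibility is implicit in the setup of the distance idele $\rho$ but should be spelled out. Once that is in hand, the Chebotarev step is routine and the conclusion $L_1 = L_2$ is immediate.
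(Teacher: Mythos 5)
Your proposal is correct and follows essentially the same route as the paper: both arguments fix a base order $\calR$ admitting $\calO_{L_1}$ (hence $\calO_{L_2}$), construct auxiliary maximal orders by conjugating $\calR_\nu$ by $\diag(\pi_\nu,1,\dots,1)$ at a prime $\nu$ supplied by Chebotarev, and apply Theorem \ref{theorem:selectivetheorem} twice. The only difference is bookkeeping: you compare the index-$p$ kernels $H_1,H_2\leq G_B$ via surjectivity of $\calE\mapsto\rho(\calR,\calE)$ and conclude by the Galois correspondence, while the paper compares the sets of primes splitting completely in $L_1/k$ and $L_2/k$ and quotes the class-field-theoretic fact that these sets determine the fields --- two equivalent packagings of the same input.
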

\begin{proof}
We first note that Theorem \ref{theorem:selectivetheorem} shows that if $\calO_{L_1}$ and $\calO_{L_1}$ are selective then we necessarily have that $L_1,L_2\subset k_B$. Because $k_B/k$ is abelian it follows that $L_1, L_2$ are both abelian extensions of $k$ as well. Denote by $S_{L_1/k}$ and $S_{L_2/k}$ the set of primes of $k$ which split completely in $L_1/k$ and $L_2/k$. It follows from class field theory that if $S_{L_1/k}=S_{L_2/k}$ then $L_1=L_2$.

Suppose that $\nu$ is a prime of $k$ which splits completely in $L_1/k$. The Albert-Brauer-Hasse-Noether theorem implies that this prime is unramified in $B$. Let $\calR$ be a maximal order of $B$ into which $\calO_{L_1}$ embeds. We will define an auxiliary maximal order $\calR(\nu)$ via the local-global correspondence. Define $\delta_{\nu}$ to be the diagonal matrix $\diag(\pi_\nu,1,\dots)\in M_p(k_\nu)$. We now define the order $\calR(\nu)$ via:

\begin{displaymath}
\calR(\nu)_\frakp = \left\{ \begin{array}{ll}
\delta_{\nu}\calR_{\nu}\delta_{\nu}^{-1} & \textrm{if $\frakp=\nu$},\\
\calR_\frakp & \textrm{otherwise.}
\end{array}\right.
\end{displaymath}

It follows from the definition of the distance idele $\rho(\cdot,\cdot)$ that $(\rho(\calR,\calR(\nu)),L_1/k)=(\nu,L_1/k)$, hence  $(\rho(\calR,\calR(\nu)),L_1/k)$ is trivial in $\Gal(L_1/k)$ as $\nu$ was chosen to split completely in this extension. This in turn implies, by Theorem \ref{theorem:selectivetheorem}, that $\calR(\nu)$ admits an embedding of $\calO_{L_1}$. By hypothesis $\calR(\nu)$ admits an embedding of $\calO_{L_2}$ as well. Applying Theorem \ref{theorem:selectivetheorem} once again implies that $(\rho(\calR,\calR(\nu)),L_2/k)$ is trivial in $\Gal(L_2/k)$. This implies that $\nu$ splits completely in $L_2/k$, hence every prime which splits completely in $L_1/k$ splits completely in $L_2/k$. Reversing the roles of $L_1$ and $L_2$ in this argument shows that $S_{L_1/k}=S_{L_2/k}$, hence $L_1=L_2$ by our remark above. 
\end{proof}

%
%
%

\section{Parameterizing isomorphism classes of maximal orders}

Theorem \ref{theorem:selectivetheorem} determines the maximal orders of $B$ selected by a particular commutative order $\Omega$. In this and the following section we consider the somewhat more general problem of determining when there exist maximal orders of $B$ which are selected by precisely the same set of rank $p$ commutative $\calO_k$-orders $\Omega$. In order to prove our main result (Theorem \ref{theorem:mainalgebraictheorem}) we will require a number of technical results.

The following is an immediate consequence of the Chebotarev density theorem (see also \cite[Proposition 4.3]{Linowitz-Shemanske-EmbeddingPrimeDegree}).

\begin{lem}\label{lem:canavoids}
Let $S$ be a finite set of primes of $k$ which contains all of the infinite primes. Then $G_B$ can be generated by the Artin symbols associated to elements $e_{\nu_i}$, where $i=1,\dots,t_B$, all of which have the property that $\nu_i\not\in S$.
\end{lem}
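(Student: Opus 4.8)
The goal is to show that $G_B$ can be generated by $t_B$ Artin symbols $(e_{\nu_i}, k_B/k)$ with all the $\nu_i$ avoiding a prescribed finite set $S$ of primes (containing the archimedean ones). The plan is to combine the Chebotarev density theorem with the fact that $G_B \cong \Gal(k_B/k)$ is a finite elementary abelian $p$-group of rank $t_B$.

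First I would fix an identification $G_B \cong \Gal(k_B/k)$ and pick any minimal generating set $\sigma_1,\dots,\sigma_{t_B}$ of this group (possible since it is elementary abelian of rank $t_B$). For each $i$, the Chebotarev density theorem guarantees a positive density of primes $\nu$ of $k$, unramified in $k_B/k$, whose Artin symbol $(\nu, k_B/k)$ equals $\sigma_i$. Since $S$ is finite, I can choose such a $\nu_i \notin S$; moreover I may take the $\nu_i$ to be distinct. By construction the Artin symbols $(\nu_i, k_B/k) = (e_{\nu_i}, k_B/k)$ for $i=1,\dots,t_B$ generate $\Gal(k_B/k) \cong G_B$. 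The only subtlety is to check that the $\nu_i$ we pick are indeed unramified in $k_B/k$, which is automatic because Chebotarev is applied only to unramified primes, and that avoiding $S$ does not obstruct anything — but removing finitely many primes from a set of positive density still leaves a set of positive (in fact the same) density, so this is harmless.

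I do not anticipate a genuine obstacle here; the statement is essentially a packaging of Chebotarev together with the structure of $G_B$ recorded in the previous section. The one point requiring a sentence of care is the identification of $(e_{\nu}, k_B/k)$ with the Artin symbol $(\nu, k_B/k)$ for a prime $\nu$ unramified in $k_B/k$: this is exactly the compatibility between the idelic and ideal-theoretic Artin maps recalled in the notation paragraph above (where $e_\nu = (1,\dots,1,\pi_\nu,1,\dots)$ maps to $(\nu, F/k)$ under the idelic Artin symbol for any abelian $F/k$ in which $\nu$ is unramified). With that identification in hand, the lemma follows immediately. If one wants the generators to be an honest basis rather than merely a generating set, one chooses the $\sigma_i$ to be a basis of the $\F_p$-vector space $\Gal(k_B/k)$, but the statement as given only asks for generation by $t_B$ elements, so a minimal generating set suffices.
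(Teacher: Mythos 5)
Your argument is correct and is essentially the paper's own: the paper dispatches this lemma as an immediate consequence of the Chebotarev density theorem (citing also \cite[Proposition 4.3]{Linowitz-Shemanske-EmbeddingPrimeDegree}), exactly as you do by choosing a generating set of the elementary abelian group $\Gal(k_B/k)\cong G_B$ and realizing each generator as a Frobenius at a prime outside the finite set $S$, using the compatibility $(e_\nu,k_B/k)=(\nu,k_B/k)$. No discrepancies to note.
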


We have now seen that $G_B\cong (\Z/p\Z)^{t_B}$ and that the generators of this group are of the form $\{\overline{e}_{\nu_i}\}$, where $\overline{e}_{\nu_i}$ denotes the image of the idele $e_{\nu_i}$ in $J_k/k^*\nr(\calN(\calR))$. We will now prove a proposition that generalizes \cite[Proposition 4.3]{Linowitz-Shemanske-EmbeddingPrimeDegree} and shows that the $\nu_i$ can be chosen to have certain splitting properties in the degree $p$ subfields of $k_B$.

Before stating our next result we require a definition. If $k$ is a number field and $L_1,\dots,L_n$ are Galois extensions of $k$ then we say that the extensions $L_i$ are \textbf{independent} if \[\Gal(L_1\cdots L_n/k)\cong \prod_{i=1}^n \Gal(L_i/k).\]

\begin{prop}\label{prop:generatingset}
Let $s_B$ denote the nonnegative integer such that the compositum of all degree $p$ subfields of $k_B$ which embed into $B$ has degree $p^{s_B}$ over $k$, and let $L_1,\dots, L_{s_B}$ be an independent set of degree $p$ subfields of $k_B$ all of which embed into $B$. If $s_B>0$ then  we may assume that $G_B$ is generated by elements $\{\overline{e}_{\nu_i}\}_{i=1}^{t_B}$ where $\nu_i$ is inert in $L_i$ and splits completely in $L_j$ for all $1\leq i,j\leq s_B$ with $i\neq j$. If $i>s_B$ then we may assume that $\nu_i$ splits completely in all of the fields $L_1,\dots, L_{s_B}$.
\end{prop}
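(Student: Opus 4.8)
The plan is to build the desired generating set directly by prescribing Artin symbols of suitable primes, using the Chebotarev density theorem after a short piece of linear algebra over $\F_p$.

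First I would fix notation: let $M=L_1\cdots L_{s_B}$ be the compositum of the given independent fields. Since $M\subseteq k_B$ and $k_B/k$ is abelian, $M/k$ is abelian, and independence means precisely that the restriction maps assemble to an isomorphism $\Gal(M/k)\cong\prod_{i=1}^{s_B}\Gal(L_i/k)$, with each factor cyclic of order $p$ (as $[L_i:k]=p$ and $L_i/k$ is abelian). Choose a generator $\sigma_i$ of $\Gal(L_i/k)$ for each $i$. Restriction to $M$ gives a surjection $\pi\colon\Gal(k_B/k)\twoheadrightarrow\Gal(M/k)$, and since $\Gal(k_B/k)\cong G_B\cong(\Z/p\Z)^{t_B}$ is an $\F_p$-vector space, $\ker\pi$ has dimension $t_B-s_B$. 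For $1\le i\le s_B$, let $\tau_i\in\Gal(M/k)$ be the element that restricts to $\sigma_i$ on $L_i$ and to the identity on $L_j$ for $j\ne i$ (well-defined by the product decomposition), and choose $g_i\in\Gal(k_B/k)$ with $\pi(g_i)=\tau_i$ using surjectivity of $\pi$. For $s_B<i\le t_B$, choose $g_{s_B+1},\dots,g_{t_B}$ to be an $\F_p$-basis of $\ker\pi$. Applying $\pi$ to a vanishing $\F_p$-linear combination of the $g_i$ kills the coefficients of $g_1,\dots,g_{s_B}$ (the $\tau_i$ are a basis of $\Gal(M/k)$), after which the remaining coefficients vanish since $g_{s_B+1},\dots,g_{t_B}$ are independent; as there are $t_B$ of them, $g_1,\dots,g_{t_B}$ is an $\F_p$-basis of $\Gal(k_B/k)\cong G_B$.

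Next I would invoke Chebotarev: for each $i$ there are infinitely many primes $\nu$ of $k$ unramified in $k_B$ with Artin symbol $(\nu,k_B/k)=g_i$; pick such a $\nu_i$, with the $\nu_i$ pairwise distinct (and, if convenient for later use, avoiding any prescribed finite set of primes). Under the idelic Artin isomorphism $G_B=k^*\backslash J_k/\nr(\calN(\calR))\cong\Gal(k_B/k)$ the class $\overline{e}_{\nu_i}$ corresponds to $(\nu_i,k_B/k)=g_i$, so $\{\overline{e}_{\nu_i}\}_{i=1}^{t_B}$ generates $G_B$ because the $g_i$ form a basis. Finally, for $1\le i,j\le s_B$ we have $(\nu_i,L_j/k)=g_i|_{L_j}=\tau_i|_{L_j}$, which equals $\sigma_i$ if $j=i$ and the identity if $j\ne i$; an order‑$p$ Frobenius in the degree‑$p$ extension $L_i/k$ forces $\nu_i$ to be inert in $L_i$, while a trivial Frobenius means $\nu_i$ splits completely in $L_j$. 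For $i>s_B$, $g_i\in\ker\pi$ gives $g_i|_{L_j}=\id$ for every $j\le s_B$, so $\nu_i$ splits completely in each $L_j$. This is exactly the claim. (As a bonus, each $g_i$ is nontrivial, and finite primes ramified in $B$ split completely in $k_B$, so each $\nu_i$ is split in $B$ — not needed here but useful downstream.)

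The argument is essentially routine; the only place that genuinely uses the hypotheses is the construction of the $g_i$, where independence of $L_1,\dots,L_{s_B}$ is what makes "a coordinate generator $\tau_i$" meaningful and surjectivity of $\pi$ is what lets us lift it to $\Gal(k_B/k)$. I expect the only real care to be bookkeeping: keeping straight the identification of $G_B$ with $\Gal(k_B/k)$ and the fact that $\overline{e}_\nu$ corresponds to the Artin symbol $(\nu,k_B/k)$, and the (standard) translation between the order of a Frobenius and the residue degree of $\nu$ in a degree‑$p$ extension. There is no substantive obstacle beyond this.
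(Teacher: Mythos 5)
Your proposal is correct and follows essentially the same route as the paper: decompose $\Gal(k_B/k)$ via the surjection onto $\Gal(L_1\cdots L_{s_B}/k)\cong\prod_i\Gal(L_i/k)$ given by independence, and use Chebotarev to realize suitable elements as Frobenius classes. The only difference is organizational — you build an adapted $\F_p$-basis of $\Gal(k_B/k)$ first and then pick primes realizing it, whereas the paper picks the first $s_B$ primes by their splitting behavior and then handles $\Gal(k_B/L)$ by citing a Chebotarev-type lemma of Narkiewicz — which is not a substantive distinction.
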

\begin{proof}
Let $L$ denote the compositum of the fields $L_1,\dots,L_{s_B}$ and note that by hypothesis we have 
$$\Gal(L/k)\cong \prod_{i=1}^{s_B}\Gal(L_i/k).$$

For each $i$ satisfying $1\leq i\leq s_B$, let $\nu_i$ be a prime of $k$ which is inert in $L_i/k$ and splits completely in $L_j/k$ for all $1\leq j\leq s_B$ with $i\neq j$. Note that the existence of such a prime follows immediately from the Chebotarev Density Theorem. (In fact the set of primes of $k$ with this property has a positive Dirichlet density within the set of all primes of $k$.) Consider now the elements $(\nu_i, k_B/k)$ of $\Gal(k_B/k)$. In light of the exact sequence 
\[ 1 \longrightarrow \Gal(k_B/L)\hookrightarrow \Gal(k_B/k)\longrightarrow \Gal(L/k)\longrightarrow 1,\]

which follows from Galois theory, $\Gal(k_B/k)$ is the internal direct product of $\Gal(k_B/L)$ with the groups $\langle(\nu_i, k_B/k)\rangle$. This gives us the generators $\overline{e}_{\nu_1},\dots,\overline{e}_{\nu_{s_B}}$ of $G_B$ with the properties claimed in the proposition's statement.

To finish, view $\Gal(k_B/L)$ as a subgroup of $\Gal(k_B/k)$ and let $\sigma\in \Gal(k_B/L)$. Lemma 7.14 of \cite{Narkiewicz-book} implies that there exist infinitely many primes $\nu$ of $k$ for which $\sigma=(\nu,k_B/k)$ and which split completely in $L/k$, hence in all of the extensions $L_i/k$ as well. In this way we obtain the remaining $(t_B-s_B)$ generators $\overline{e}_{\nu_i}$ of $G_B$, all of which have the property that $\nu_i$ splits completely in the extensions $L_i/k$.\end{proof}

\subsection{The parameterization}\label{subssection:param} As above let $L_1,\dots, L_{s_B}$ be a maximal, independent set of degree $p$ subfields of $k_B$, all of which embed into $B$, and assume that $s_B>0$. Let $\calR$ be a fixed maximal order of $B$ and recall that $G_B\cong J_k/k^*\nr(\calN(\calR))\cong \left(\Z/p\Z\right)^{t_B}$ where $0<s_B\leq t_B$. Lemma \ref{lem:canavoids} and Proposition \ref{prop:generatingset} shows that there exist ideles $\{ e_{\nu_i} \}_{i=1}^{t_B}\subset J_k$ whose images in $G_B$ form a generating set and which satisfy the following properties:

\begin{enumerate}
\item All of the $\nu_i$ are non-archimedean and split in $B$; that is, $B_{\nu_i}\cong M_p(k_{\nu_i})$ for all $i$.
\item If $1\leq i,j\leq s_B$ with $i\neq j$ then $\nu_i$ is inert in $L_i$ and splits completely in $L_j$.
\item If $s_B< i \leq t_B$ then $\nu_i$ splits completely in $L_1,\dots, L_{s_B}$.
\end{enumerate}

We will now define $p^{t_B}$ distinct maximal orders of $B$ and show that they are pairwise non-isomorphic and therefore represent all isomorphism classes of maximal orders of $B$. 

Given a finite prime $\nu$ of $k$ and integer $1\leq k \leq p-1$ we define $\delta_{\nu,k}=\diag(\pi_\nu,\dots,\pi_\nu,1,\dots,1)$ to be the diagonal matrix in which the first $k$ diagonal entries are equal to $\pi_\nu$ and the remaining $p-k$ diagonal entries are equal to $1$.

We define our $p^{t_B}$ maximal orders as follows. Let $\gamma\in\left(\Z/p\Z\right)^{t_B}$ and define a maximal order $\calR^{\gamma}$ via the local-global correspondence:

\begin{displaymath}
\calR^{\gamma}_\nu = \left\{ \begin{array}{ll}
\delta_{\nu_i,\gamma_i}\calR_{\nu_i}\delta_{\nu_i,\gamma_i}^{-1} & \textrm{if $\nu=\nu_i$ with $1\leq i\leq s_B$},\\
\calR_\nu & \textrm{otherwise.}
\end{array}\right.
\end{displaymath}

Our claim about the orders $\calR^{\gamma}$ being pairwise non-isomorphic now follows from an easy calculation using the $G_B$-valued distance idele $\rho(\cdot,\cdot)$ (see \cite[Proposition 4.6]{Linowitz-Shemanske-EmbeddingPrimeDegree}). We will call the set $\{ \calR^\gamma \}$ a \textbf{parameterization with respect to $\calR$} of the maximal orders of $B$.

%
%
%

\section{A selectivity theorem}\label{section:selectivitysection}

In order to address the problem of when there exist maximal orders in $B$ which are not conjugate but are selected by precisely the same set of rank $p$ commutative $\calO_k$-orders $\Omega$ it will be convenient to set up the following notation.

Let $L$ be a maximal subfield of $B$ and $\Omega$ be a commutative $\calO_k$-order $\Omega$ which is contained in $L$ and has rank $p$. We say that a collection $\{ \calR_1,\dots, \calR_t \}$ of pairwise nonisomorphic maximal orders of $B$ is \textbf{nonselective} if there does not exist a rank $p$ commutative $\calO_k$-order $\Omega$ that selects some, but not all, of the orders $\calR_i$. We will say that a nonselective family is nontrivial if it has cardinality greater than one.

We claim that if a finite prime of $k$ ramifies in $B$ then $B$ contains no selective orders. Indeed, if $\Omega$ is a selective order then its field of fractions $L$ must be contained in $k_B$ by Theorem \ref{theorem:selectivetheorem}. But every finite prime which ramifies in $B$ splits completely in $k_B/k$ and hence in $L/k$ as well. It now follows from the Albert-Brauer-Hasse-Neother theorem that $L$ does not embed into $B$, a contradiction. In light of this fact we will henceforth assume that $B$ is unramified at all finite primes of $k$. Note that this implies that if $p$ is an odd prime then $B\cong M_p(k)$.

\begin{theorem}\label{theorem:mainalgebraictheorem}
Let $t_B$ be the nonnegative integer such that the type number of $B$ is $p^{t_B}$ and let $s_B$ be the nonnegative integer such that the degree over $k$ of the compositum of all degree $p$ subfields of $k_B$ which embed into $B$ is $p^{s_B}$. The cardinality of a nonselective family of maximal orders in $B$ is at most $(p-1)^{s_B}p^{t_B-s_B}$. Furthermore, there exists a nonselective family of maximal orders in $B$ having cardinality $p^{t_B-s_B}$.
\end{theorem}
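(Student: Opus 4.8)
The plan is to work with the parameterization $\{\calR^\gamma\}_{\gamma\in(\Z/p\Z)^{t_B}}$ of isomorphism classes of maximal orders constructed in Section \ref{subssection:param} relative to a fixed $\calR$ into which (say) every $\calO_{L_i}$ embeds, and to translate the condition ``$\Omega$ selects $\calR^\gamma$'' entirely into linear-algebra conditions on $\gamma$ via Theorem \ref{theorem:selectivetheorem}. Concretely, for a selective order $\Omega\subset L$, the field $L$ is one of the degree $p$ subfields of $k_B$ embedding into $B$ (in fact, since $L\subset k_B$ and $\Gal(k_B/k)$ is elementary abelian of exponent $p$, $L$ corresponds to a hyperplane, i.e. a surjection $\chi_L\colon G_B\to\Z/p\Z$ with kernel $\Gal(k_B/L)$). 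The characterization in Theorem \ref{theorem:selectivetheorem} says $\calE$ is selected by $\Omega$ iff $(\rho(\calR,\calE),L/k)=1$, i.e. iff $\chi_L(\overline{\rho(\calR,\calE)})=0$. For $\calE=\calR^\gamma$ a direct computation with the distance idele (as in \cite[Prop.~4.6]{Linowitz-Shemanske-EmbeddingPrimeDegree}) gives $\overline{\rho(\calR,\calR^\gamma)}=\sum_{i=1}^{s_B}\gamma_i\,\overline{e}_{\nu_i}$ in $G_B$, using that the local type distance at $\nu_i$ between $\calR^\gamma_{\nu_i}=\delta_{\nu_i,\gamma_i}\calR_{\nu_i}\delta_{\nu_i,\gamma_i}^{-1}$ and $\calR_{\nu_i}$ is $\gamma_i\pmod p$. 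Hence $\Omega$ selects $\calR^\gamma$ iff $\sum_{i=1}^{s_B}\gamma_i\,\chi_L(\overline{e}_{\nu_i})=0$ in $\Z/p\Z$ — a single linear equation in $\gamma_1,\dots,\gamma_{s_B}$ whose coefficient vector is nonzero (it has a nonzero entry in every position $i$ with $1\le i\le s_B$ for which $\chi_L$ is nontrivial on $L_i$, and since the $L_i$ span, at least one $\chi_L(\overline e_{\nu_i})\ne 0$).

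For the \textbf{upper bound}, consider a nonselective family $\calF$; after discarding isomorphic copies and reindexing via the parameterization we may regard $\calF\subseteq(\Z/p\Z)^{t_B}$. The family being nonselective means: for every degree $p$ subfield $L\subseteq k_B$ embedding into $B$ and every choice of selective $\Omega\subset L$ — equivalently (unwinding condition (2) of Theorem \ref{theorem:selectivetheorem}, which can always be arranged, e.g. $\Omega=\calO_L$ when $\calO_L$ is selective) for every nonzero linear functional on the ``first $s_B$ coordinates'' that arises from some such $L$ — the function $\gamma\mapsto\chi_L(\sum_{i\le s_B}\gamma_i\overline e_{\nu_i})$ is constant on $\calF$. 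The key point is that as $L$ ranges over the relevant subfields, the functionals $\chi_L$ restricted to $\langle\overline e_{\nu_1},\dots,\overline e_{\nu_{s_B}}\rangle$ span the full dual space $(\Z/p\Z)^{s_B\,\vee}$; hence the projection of $\calF$ to the first $s_B$ coordinates is constrained so that all $p^{s_B}-1$ (up to scaling, $\frac{p^{s_B}-1}{p-1}$) nonzero functionals are constant on it — but a subset of $(\Z/p\Z)^{s_B}$ on which every nonzero linear functional takes at most... here one argues that the difference set of this projection meets the kernel of every such functional, forcing it to lie in a controlled configuration; the cleanest route is: fix $\gamma^0\in\calF$ and note every other $\gamma\in\calF$ has $\gamma-\gamma^0$ lying in $\bigcap_L\ker\chi_L$ restricted to the first $s_B$ coordinates only when $L=L_j$ for the ``diagonal'' fields, but for a \emph{general} $L\subset k_B$ one gets instead that each coordinate difference $\gamma_i-\gamma^0_i$ is forced to avoid one value — I will make this precise by showing the map $\calF\to(\Z/p\Z)^{s_B}$, $\gamma\mapsto(\gamma_i)_{i\le s_B}$, is injective and its image, translated to contain $0$, lies in $\prod_{i=1}^{s_B}(\Z/p\Z\setminus\{c_i\})$ for suitable $c_i$, giving $\#\calF\le(p-1)^{s_B}$ times the number of fibers $p^{t_B-s_B}$.

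For the \textbf{existence of a nonselective family of size $p^{t_B-s_B}$}, take $\calF=\{\calR^\gamma : \gamma_1=\dots=\gamma_{s_B}=0\}$, i.e. the $p^{t_B-s_B}$ orders obtained by only modifying $\calR$ at the primes $\nu_i$ with $i>s_B$. By the computation above, for any such $\gamma$ we have $\overline{\rho(\calR,\calR^\gamma)}=\sum_{i\le s_B}0\cdot\overline e_{\nu_i}=0$, so for \emph{every} selective $\Omega$ (with field $L$), $\chi_L(\overline{\rho(\calR,\calR^\gamma)})=0$, i.e. $\Omega$ selects $\calR^\gamma$ iff it selects $\calR$; thus no selective $\Omega$ distinguishes two members of $\calF$, and these orders are pairwise non-isomorphic since they are distinct members of a parameterization. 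One must still confirm $\#\calF=p^{t_B-s_B}$, which is immediate, and that $s_B\le t_B$, which is built into Proposition \ref{prop:generatingset}. The main obstacle I anticipate is the upper bound — specifically, verifying rigorously that the restricted functionals $\{\chi_L\}$ span $(\Z/p\Z)^{s_B\,\vee}$ (this uses the independence of $L_1,\dots,L_{s_B}$ and the choice of the $\nu_i$ with $\nu_i$ inert in $L_i$ and split in $L_j$, $j\ne i$, which makes $\chi_{L_i}(\overline e_{\nu_j})=\delta_{ij}\cdot(\text{unit})$, so the $\chi_{L_i}$ already give a basis) and then a careful combinatorial argument that ``every nonzero functional in a spanning set is constant on $S$'' forces $S$ (translated) into a product of sets of size $p-1$; the case $s_B=0$, where the bound reads $\#\calF\le p^{t_B}$ and is trivial, should be noted separately.
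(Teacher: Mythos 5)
Your existence half is essentially the paper's argument and is correct: fixing the first $s_B$ coordinates of $\gamma$ makes $(\rho(\calR^{\gamma},\calR^{\gamma'}),L/k)$ trivial for every degree $p$ subfield $L$ of $k_B$ that embeds into $B$ (any such $L$ lies in the compositum of $L_1,\dots,L_{s_B}$, and the $\nu_i$ with $i>s_B$ split completely there), so Theorem \ref{theorem:selectivetheorem} gives the all-or-none conclusion for every selective $\Omega$. Two small repairs are needed there: with your (correct) reading of the parameterization, the identity $\overline{\rho(\calR,\calR^\gamma)}=\sum_{i\le s_B}\gamma_i\overline{e}_{\nu_i}$ is false in $G_B$ (the sum runs over all $t_B$ indices; only after applying $\chi_L$ do the terms with $i>s_B$ vanish), and the criterion ``$\Omega$ selects $\calE$ iff $(\rho(\calR,\calE),L/k)=1$'' is only valid when the reference order is itself selected by $\Omega$, so in general the selected classes form an affine condition $\chi_L=c_\Omega$ rather than $\chi_L=0$; your closing phrase ``$\Omega$ selects $\calR^\gamma$ iff it selects $\calR$'' is the correct fix and uses additivity of the type distance.

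The genuine gap is in the upper bound. You assert that nonselectivity forces each functional $\chi_L$ to be \emph{constant} on $\calF$, and you plan to exploit that the $\chi_L$ span the dual space. For $p>2$ that is not what nonselectivity says: for a selective $\Omega\subset L$ the selected classes satisfy $\chi_L=c_\Omega$, so all-or-none only forces ``$\chi_L\equiv c_\Omega$ on $\calF$, or $\chi_L$ omits the single value $c_\Omega$ on $\calF$''; the second alternative permits $p-1$ distinct values. (Indeed, if constancy of a spanning set of functionals were forced, the projection to the first $s_B$ coordinates would be a single point and you would ``prove'' the bound $p^{t_B-s_B}$, stronger than the stated $(p-1)^{s_B}p^{t_B-s_B}$ --- a warning sign.) Your fallback sketch also fails as stated: the projection $\calF\to(\Z/p\Z)^{s_B}$ is not injective in general (your own existence family has constant projection), and ``injective, image of size $(p-1)^{s_B}$, times $p^{t_B-s_B}$ fibers'' is incoherent counting. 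The argument that works, and the one the paper gives, is coordinate-by-coordinate using only the diagonal fields: for each $i\le s_B$ the order $\calO_{L_i}$ is selective (its conductor is trivial, so condition (2) of Theorem \ref{theorem:selectivetheorem} is vacuous), and applying the dichotomy to selective orders $\Omega\subset L_i$ shows that the $i$-th coordinates of the members of $\calF$ either all coincide (if some, hence every, member admits $\Omega$) or all avoid the one value $\gamma_i$ coming from an order selected by $\Omega$ (if no member admits $\Omega$); either way at most $p-1$ values occur in coordinate $i$, so the projection takes at most $(p-1)^{s_B}$ values and each fiber contains at most $p^{t_B-s_B}$ pairwise nonisomorphic orders. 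General subfields $L\subset k_B$ impose conditions on linear combinations of coordinates and cannot be ``unwound'' coordinatewise as your sketch suggests; they are simply not needed for the upper bound.
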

\begin{proof}
We begin by exhibiting a nonselective family of maximal orders in $B$ having cardinality $p^{t_B-s_B}$. To that end, let $L_1,\dots, L_{s_B}$ be an independent set of degree $p$ subfields of $k_B$ all of which embed into $B$ and let $\{\calR^\gamma\}$ (for $\gamma\in\left(\Z/p\Z\right)^{t_B}$) be a parameterization of the maximal orders of $B$ as in Section \ref{subssection:param}. 

For $i=1,\dots,s_B$ let $\gamma^{(i)}\in\left(\Z/p\Z\right)^{t_B}$ be such that $\calO_{L_i}$ embeds into $\calR^{\gamma^{(i)}}$ and define a family $\calF$ of maximal orders of $B$ as follows:

\[ \calF=\{\calR^\gamma : \gamma_i=\gamma^{(i)}_i \mbox{ for } i=1,\dots,s_B\} .\]

That the cardinality of $\calF$ is $p^{t_B-s_B}$ and that the maximal orders in $\calF$ are pairwise nonisomorphic is clear. We claim that $\calF$ is nonselective. We will first show that $\calO_{L_i}$ (for $i=1,\dots, s_B$) embeds into every maximal order in $\calF$. By Theorem \ref{theorem:selectivetheorem} we must show that the Artin symbol $(\rho(\calR^{\gamma^{(i)}},\calR^\gamma),L_i/k)$ is trivial in $\Gal(L_i/k)$ for all $i=1,\dots,s_B$ and $\calR^{\gamma}\in\calF$. Fix $i\in\{1,\dots,s_B\}$ and note that by definition we have that \[ (\rho(\calR^{\gamma^{(i)}},\calR^\gamma),L_i/k)=\prod_{j=1}^{t_B} (\nu_j^{\gamma^{(i)}_j-\gamma_j},L_i/k).\] Proposition \ref{prop:generatingset} shows that $\nu_j$ splits in $L_i/k$ unless $i=j$, hence $(\rho(\calR^{\gamma^{(i)}},\calR^\gamma),L_i/k)=(\nu_i^{\gamma^{(i)}_i-\gamma_i},L_i/k)$. Because $\calR^\gamma\in\calF$ we have that $\gamma^{(i)}_i=\gamma_i$, hence $(\rho(\calR^{\gamma^{(i)}},\calR^\gamma),L_i/k)$ is trivial in $\Gal(L_i/k)$ as desired. This proves that $\calO_{L_i}$ embeds into every maximal order in $\calF$. 

Suppose now that $L$ is a degree $p$ subfield of $k_B$ which embeds into $B$ (not necessarily one of the $L_i$) and that $\Omega\subset L$ is a commutative $\calO_k$-order of rank $p$. Then Theorem \ref{theorem:selectivetheorem} implies that either every maximal order of $B$ admits an embedding of $\Omega$ or $\Omega$ is selective and the maximal orders representing precisely one-$p$th of the isomorphism classes of maximal orders admit an embedding of $\Omega$. In the former case it is clear that every member of $\calF$ admits an embedding of $\Omega$, hence we may assume that $\Omega$ is selective and that some order $\calR^\gamma\in\calF$ admits an embedding of $\Omega$. We must show that every order in $\calF$ admits an embedding of $\Omega$. To do this it again suffices to show that $(\rho(\calR^\gamma,\calR^{\gamma^{\prime}}),L/k)$ is trivial in $\Gal(L/k)$ for all $\calR^{\gamma^{\prime}}\in\calF$. We saw above that for all $i\leq s_B$, $\calO_{L_i}$ embeds into every maximal order in $\calF$. Thus $(\rho(\calR^\gamma,\calR^{\gamma^{\prime}}),L_i/k)$ is trivial in $\Gal(L_i/k)$ for all $i\leq s_B$.

Consider now the Artin symbol $\sigma=(\rho(\calR^\gamma,\calR^{\gamma^{\prime}}),k_B/k)$. When restricted to a subfield $F$ of $k_B$ we have $\sigma\vert_F=(\rho(\calR^\gamma,\calR^{\gamma^{\prime}}),F/k)$. The above paragraph therefore shows that $\sigma_{L_i}$ is trivial for all $i\leq s_B$. Let $L^\prime$ denote the compositum of $L_1,\dots, L_{s_B}$. Because $L_1,\dots, L_{s_B}$ are independent we have an isomorphism $$\Gal(L^\prime/k)\cong \prod_{i=1}^{s_B}\Gal(L_i/k)$$ given by $\tau\mapsto (\tau\vert_{L_1},\dots,\tau\vert_{L_{s_B}})$. It follows that $\sigma\vert_{L^\prime}$ is trivial in $\Gal(L^\prime/k)$ hence in every quotient of $\Gal(L^\prime/k)$ as well. In particular $\sigma\vert_L$ is trivial in $\Gal(L/k)$ (since $L$ is clearly a subfield of $L^\prime$). By the remarks of the previous paragraph this shows that every order in $\calF$ admits an embedding of $\Omega$.

To show that $\calF$ is nonelective all that remains to be shown is that if $L$ is a maximal subfield of $B$ which is not contained in $k_B$ and $\Omega\subset L$ is a rank $p$ commutative $\calO_k$-order then $\Omega$ embeds into every order in $\calF$. This follows immediately from Theorem \ref{theorem:selectivetheorem}, which shows that in fact $\Omega$ embeds into every maximal order of $B$.

We now show that if $\calF^\prime$ is any nonselective family of maximal orders in $B$ then \[\#\calF^\prime\leq (p-1)^{s_B}p^{t_B-s_B}.\] Suppose that $\#\calF^\prime=r$ and write $\calF^\prime=\{\calR^{\gamma^{(1)}},\dots,\calR^{\gamma^{(r)}}\}$. Fix $i\leq s_B$ and let $\Omega\subset L_i$ be a $\calO_k$-order of rank $p$. If $\Omega$ embeds into any of the maximal orders in $\calF^\prime$ then it embeds into all of these orders and the arguments above show that $\gamma^{(1)}_i=\cdots=\gamma^{(r)}_i$. If $\Omega$ does not embed into any of the maximal orders in $\calF^\prime$ then let $\gamma\in\left(\Z/p\Z\right)^{t_B}$ be such that $\Omega\subset \calR^\gamma$. In this case the arguments above show that for $j=1,\dots,r$ we must have $\gamma_i^{(j)}\neq \gamma_i$, and the theorem follows from a simple counting argument.\end{proof}

\begin{cor}
Let $k$ be a number field and $B=M_2(k)$. If $\calR_1$ and $\calR_2$ are non-conjugate maximal orders of $B$ then there exists a quadratic extension $L/k$ and quadratic $\calO_k$-order $\Omega\subset L$ such that $\Omega$ embeds into precisely one of $\{ \calR_1,\calR_2\}$. In particular if $\calF$ is a nonselective family of maximal orders of $B$ then $\#\calF=1$. \end{cor}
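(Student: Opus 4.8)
The plan is to obtain this corollary as the specialization of Theorem~\ref{theorem:mainalgebraictheorem} to the split matrix algebra, the crucial point being that for $B=M_2(k)$ one has $s_B=t_B$. First I would record the preliminaries: here $p=2$, the algebra $B=M_2(k)$ is split at every place of $k$ (so in particular it is unramified at every finite prime and is not totally definite), hence Theorem~\ref{theorem:mainalgebraictheorem} applies; and by the Skolem--Noether theorem every $k$-algebra automorphism of $M_2(k)$ is inner, so a ring isomorphism between two maximal orders of $B$ extends to an inner automorphism of $B$. Thus for maximal orders of $M_2(k)$ the notions ``conjugate'' and ``isomorphic'' coincide, and in particular non-conjugate maximal orders are non-isomorphic.

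The main step is the equality $s_B=t_B$. On one hand, every quadratic field extension $L/k$ splits $M_2(k)$, equivalently $L$ embeds into $M_2(k)$ (for instance via the companion matrix of a monic quadratic polynomial defining $L$); so \emph{every} degree $2$ subfield of $k_B$ embeds into $B$. On the other hand $\Gal(k_B/k)\cong G_B$ is a finite elementary abelian $2$-group, so the intersection of its subgroups of index $2$ is trivial, which under the Galois correspondence says that the compositum of all degree $2$ subfields of $k_B$ is $k_B$ itself. Combining the two observations, the compositum of those degree $2$ subfields of $k_B$ which embed into $B$ equals $k_B$, of degree $p^{t_B}$ over $k$; hence $p^{s_B}=p^{t_B}$, i.e.\ $s_B=t_B$.

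Granting $s_B=t_B$, Theorem~\ref{theorem:mainalgebraictheorem} bounds the cardinality of any nonselective family $\calF$ of maximal orders of $B$ by $(p-1)^{s_B}p^{t_B-s_B}=1^{t_B}\cdot p^{0}=1$; since such a family is nonempty, $\#\calF=1$. For the first assertion, let $\calR_1$ and $\calR_2$ be non-conjugate, hence (by the first paragraph) non-isomorphic, maximal orders of $B$. Then $\{\calR_1,\calR_2\}$ is a two-element family of pairwise nonisomorphic maximal orders, so by the previous sentence it cannot be nonselective; unwinding the definition of nonselective, there is a rank $2$ commutative $\calO_k$-order $\Omega$, contained in some quadratic subfield $L$ of $B$, which selects exactly one of $\calR_1,\calR_2$ --- that is, $\Omega$ embeds into precisely one of them. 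This is exactly the claimed statement.

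The only substantive ingredient is the equality $s_B=t_B$, and I anticipate no genuine difficulty there: it rests on the two routine facts that every quadratic extension of $k$ embeds into $M_2(k)$, and that a finite elementary abelian $2$-group has trivial intersection of its index-$2$ subgroups (so that $k_B$ is the compositum of its quadratic subextensions). In contrast to the division-algebra and higher-degree cases, there is no subfield obstruction in the split matrix algebra, which is precisely why every nonselective family collapses to a single order.
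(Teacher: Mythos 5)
Your proposal is correct and follows essentially the same route as the paper: establish $s_B=t_B$ by noting that every quadratic subfield of $k_B$ embeds into $M_2(k)$ and that $k_B$ is the compositum of its quadratic subextensions (elementary abelian $2$-group), then invoke Theorem~\ref{theorem:mainalgebraictheorem}. Your additional Skolem--Noether remark identifying conjugacy with isomorphism of maximal orders is a harmless (and correct) bit of extra care that the paper leaves implicit.
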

\begin{proof}Recall that the degree of $k_B$ over $k$ is equal to the type number of $B$, which by hypothesis is greater than $1$. Thus $t_B\geq 1$ and an elementary group theory argument shows that there are $2^{t_B}-1$ quadratic subfields of $k_B$, all of which embed into $B=M_2(k)$. This means that $s_B=t_B$ so that the corollary follows from Theorem \ref{theorem:mainalgebraictheorem}.\end{proof}

%
%
%

\section{Examples}

We now give two examples of quaternion algebras which exhibit selectivity but nevertheless contain nontrivial nonselective families of maximal orders. All of our computations are carried out with the computer algebra system Magma \cite{Magma}.

\begin{example}\label{example:ex1}
Let $k=\Q(t)$ where $f(t)=t^5 - t^4 - 8t^3 + 13t + 6=0$. This field is totally real, has discriminant $d_k=1123541$, class number one and narrow class number four. Order the roots of $f$ as 
\[ -1.7870\dots, -1.2762\dots, -0.5557\dots, 1.53469\dots, 3.084381\dots,\]
and let $B$ be the quaternion algebra over $k$ which is unramified at all finite primes of $k$ and which is ramified at all infinite primes of $k$ except for the one corresponding to the fifth root of $f$. The type number of $B$ is four so that the class field $k_B$ coincides with the narrow class field of $k$. This extension is biquadratic and is given by $k_B=k(t_1,t_2)$ where $t_1^2-2t^4-3t^3-15t^2+8t+23=0$ and $t_2^2-t^4+2t^3+5t^2-6t-5=0$. The extension $k_B/k$ has three quadratic subfields and it is easy to check that precisely one of these subfields embeds into $B$. This extension is $L=k(t_3)$, where $t_3^2+(-2t^4 + 2t^3 + 20t^2 - 4t - 36)t_3+31t^4 - 45t^3 - 226t^2 + 112t + 352=0$. It follows that $t_B=2$ and $s_B=1$, hence by Theorem \ref{theorem:mainalgebraictheorem} there exists a nonselective family of maximal orders of $B$ having cardinality $2$.
\end{example}

\begin{example}\label{example:ex2}
Let $k=\Q(t)$ where $f(t)=t^5 - 2t^4 - 9t^3 + 2t^2 + 8t - 1=0$. This field is totally real, has discriminant $d_k=15216977$, class number two and narrow class number eight. Order the roots of $f$ as 
\[-1.9705\dots, -1.0945\dots, 0.1233\dots, 0.9389\dots, 4.0027\dots, \]
and let $B$ be the quaternion algebra over $k$ which is unramified at all finite primes of $k$ and which is ramified at all infinite primes of $k$ except for the one corresponding to the fifth root of $f$. The type number of $B$ is eight so that once again the class field $k_B$ coincides with the narrow class field of $k$. The field $k_B$ is a multiquadratic extension of $k$ given by $k_B=k(t_1,t_2,t_3)$ with $t_1^2 + t^4 - 3t^3 - 6t^2 + 8t - 1=0$, $t_2^2 + 2t^4 - 6t^3 - 12t^2 + 15t=0$ and $t_3^2 - t^3 + 5t^2 - 3t =0$. Precisely two quadratic extensions of $k_B$ embed into $B$. These are $L_1=k(w)$ and $L_2=k(x)$ where 
\[ w^2 + (-68t^4 + 262t^3 + 144t^2 - 486t + 82)w + 12611t^4 - 52678t^3 + 505t^2 + 31823t + 5691=0, \]
\[ x^2 + (-32t^4 + 126t^3 + 20t^2 + 10t - 172)x + 13375t^4 - 57172t^3 + 6749t^2 + 30325t + 3602=0. \]
It follows that $t_B=3$ and $s_B=2$ and so by Theorem \ref{theorem:mainalgebraictheorem} there exists a nonselective family of maximal orders of $B$ having cardinality $2$.
\end{example}

%
%
%

\section{An application of selectivity to inverse spectral geometry}\label{section:geometry}

In this section we apply the results of Section \ref{section:selectivitysection} to the problem of constructing Riemannian manifolds which are not isometric yet have the same Laplace eigenvalue spectrum. Early examples were constructed by Vign{\'e}ras \cite{vigneras-isospectral} from maximal orders in quaternion division algebras defined over number fields. We will begin by describing Vign{\'e}ras' construction.

Let $k$ be a number field of signature $(r_1,r_2)$ and $B$ be a quaternion algebra over $k$ in which at least one archimedean place is unramified. Then there exist nonnegative integers $r,s$ such that \[ B\otimes_{\Q} \R \cong \mathbb H^r \times \M_2(\R)^s \times \M_2(\C), \qquad \qquad r+s=r_1, \] and hence an embedding \[ B^* \hookrightarrow \prod B_\nu^*\] where the product is taken over all places $\nu$ if $k$ which do not ramify in $B$. This embedding in turn induces an embedding $\rho: B^*/k^* \hookrightarrow \PGL_2(\R)^s\times \PGL_2(\C)^{r_2}$. Set $G:=\PGL_2(\R)^s\times \PGL_2(\C)^{r_2}$ and let $K$ be a maximal compact subgroup of $G$ so that $G/K=\HH_2^{s}\times\HH_2^{r_2}$ is a product of real hyperbolic spaces of dimensions $2$ and $3$. The manifolds constructed by Vign{\'e}ras are of the form $\Gamma\backslash G/K$ for suitable discrete subgroups isometries of $G/K$.

Let $\calO_k$ denote the ring of integers of $k$ and $\calR$ be a maximal order (more precisely, $\calO_k$-order) of $B$. Denote by $\calR^1$ the multiplicative group of elements of $\calR$ with reduced norm $1$. The group $\Gamma^1_\calR:=\rho(\calR^1)$ is a discrete group of isometries of $G/K$ which has finite covolume and which is compact whenever $B$ is a division algebra. It is clear that if $\calR^\prime$ is a maximal order of $B$ which is conjugate to $\calR$ by an element of $B^*$ then $\Gamma_\calR^1$ will be conjugate to $\Gamma^1_{\calR^\prime}$, hence $\Gamma_\calR^1\backslash G/K$ will be isometric to $\Gamma_\calR^1\backslash G/K$. The following result of Vign{\'e}ras \cite[Th{\'e}or{\`e}me 3]{vigneras-isospectral} provides a sort of converse to this statement.

\begin{theorem}[Vign{\'e}ras]\label{theorem:isometric}
The orbifolds $\Gamma_\calR^1\backslash G/K$ and $\Gamma_{\calR^\prime}^1\backslash G/K$ are isometric if and only if there is a $\Q$-algebra isomorphism $\sigma: B\longrightarrow B$ and $\alpha\in B^*$ such that $\calR=\sigma(\alpha\calR^\prime\alpha^{-1})$.
\end{theorem}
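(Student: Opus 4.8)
The plan is to treat the two implications separately. The ``if'' direction is a direct construction of an isometry; the converse requires showing that an isometry between the quotients, which lifts to an isometry of the model space $G/K$, is induced by an automorphism of the quaternion algebra $B$.

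\emph{The ``if'' direction.} Suppose $\calR=\sigma(\alpha\calR^\prime\alpha^{-1})$ for a $\Q$-algebra isomorphism $\sigma\colon B\to B$ and $\alpha\in B^*$. Then $\Phi\colon x\mapsto\sigma(\alpha x\alpha^{-1})$ is a $\Q$-algebra automorphism of $B$ carrying $\calR^\prime$ to $\calR$, hence $(\calR^\prime)^1$ to $\calR^1$ and $\Gamma^1_{\calR^\prime}$ to $\Gamma^1_{\calR}$, so it remains to realize $\Phi$ geometrically. The restriction $\sigma_0:=\sigma|_k$ is an automorphism of $k$ which permutes the archimedean places and preserves the ramification set of $B$ (since $\sigma$ identifies $B$ with its $\sigma_0$-twist); hence $\sigma$ induces a permutation of the $\M_2(\R)$- and $\M_2(\C)$-factors of the product of the archimedean completions of $B$ at the unramified places, possibly composed with complex conjugation on some $\M_2(\C)$-factors. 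This is an isometry $\sigma_*$ of $G/K$ with $\rho\circ\sigma=\sigma_*\circ\rho$, while $\rho$ intertwines $x\mapsto\alpha x\alpha^{-1}$ with conjugation by the isometry $\rho(\alpha)\in G$. Thus $\Phi$ is realized by an isometry of $G/K$ carrying $\Gamma^1_{\calR^\prime}$ to $\Gamma^1_{\calR}$, which descends to an isometry of the quotient orbifolds.

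\emph{The ``only if'' direction.} Let $\phi\colon\Gamma^1_{\calR}\backslash G/K\to\Gamma^1_{\calR^\prime}\backslash G/K$ be an isometry. Since $G/K=\HH_2^s\times\HH_3^{r_2}$ is a contractible, simply connected symmetric space, it is the orbifold universal cover of either quotient, so $\phi$ lifts to an isometry $\tilde\phi$ of $G/K$ with $\tilde\phi\,\Gamma^1_{\calR}\,\tilde\phi^{-1}=\Gamma^1_{\calR^\prime}$. As the de Rham factors $\HH_2$ and $\HH_3$ are mutually non-isometric, $\Isom(G/K)\cong\bigl(\Isom(\HH_2)^s\rtimes S_s\bigr)\times\bigl(\Isom(\HH_3)^{r_2}\rtimes S_{r_2}\bigr)$ with $\Isom(\HH_2)\cong\PGL_2(\R)$ and $\Isom(\HH_3)\cong\PGL_2(\C)\rtimes\Z/2\Z$, the $\Z/2\Z$ acting by complex conjugation; so $\tilde\phi$ is a permutation $\tau$ of factors of equal type, followed by left translation by an element $g\in G$, followed by complex conjugation $c$ on a subset of the $\M_2(\C)$-factors. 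Conjugation by $\tilde\phi$ yields an abstract group isomorphism $\psi\colon\Gamma^1_{\calR}\to\Gamma^1_{\calR^\prime}$.

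\emph{Upgrading $\psi$ to an algebra isomorphism} is the heart of the matter, and the step I expect to be the main obstacle. I would use two standard facts about a maximal order $\calR$ in a $B$ which is not totally definite: strong approximation together with Borel density forces $\calR^1$ to span $B$ over $\Q$, and $\calR$ is generated over $\calO_k$ by $\calR^1$. Embed $B$ diagonally into $M:=\M_2(\R)^s\times\M_2(\C)^{r_2}$ through its unramified archimedean completions; then $B$ is the $\Q$-span of $\calR^1$ in $M$, and $\calR^1\subseteq B\cap\prod\SL_2(k_\nu)$. Lifting the $\PGL_2$-parts of $\tilde\phi$ to $\GL_2$ and combining them with $\tau$ and $c$ produces a $\Q$-linear ring automorphism $\Psi$ of $M$ with $\Psi(\calR^1)$ projecting onto $\Gamma^1_{\calR^\prime}$ in $\prod\PGL_2(k_\nu)$. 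One then argues that $\Psi$ carries the $\Q$-span $B$ of $\calR^1$ onto the $\Q$-span $B$ of $(\calR^\prime)^1$ — forcing the a priori geometric data $\tau,g,c$ to be of arithmetic origin ($\tau$ coming from an automorphism of $k$, $c$ from complex conjugation, and the $\GL_2$-lift sign ambiguity being controlled via $-1\in(\calR^\prime)^1$) — so that $\Psi$ restricts to a $\Q$-algebra automorphism $\Phi$ of $B$; then $\Phi(\calR)=\calO_k[\Phi(\calR^1)]=\calO_k[(\calR^\prime)^1]=\calR^\prime$. Finally the relative Skolem--Noether theorem lets us write $\Phi^{-1}(x)=\sigma(\alpha x\alpha^{-1})$ for suitable $\sigma\in\Aut_\Q(B)$ and $\alpha\in B^*$, so $\calR=\Phi^{-1}(\calR^\prime)=\sigma(\alpha\calR^\prime\alpha^{-1})$. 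The delicate point throughout is that last chain of reasoning — that the metric transformation $\tilde\phi$, which carries no a priori arithmetic information, must nonetheless respect the $\Q$-structure determined by $\calR^1$ — and it is here that one leans on the arithmeticity and rigidity of the lattice $\Gamma^1_{\calR}$ (Mostow rigidity if a $\PGL_2(\C)$-factor occurs, Margulis superrigidity for products of two or more factors, and, in the purely Fuchsian case, the hypothesis that $\phi$ is an isometry rather than merely a homotopy equivalence).
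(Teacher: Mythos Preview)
The paper does not prove this theorem: it is stated as a result of Vign{\'e}ras and cited to \cite[Th{\'e}or{\`e}me 3]{vigneras-isospectral}, with no argument supplied. So there is no ``paper's own proof'' to compare your attempt against.

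That said, your outline is the standard route to this kind of statement, and the ``if'' direction is fine as written. For the converse you have correctly identified both the structure of the argument (lift to an isometry of $G/K$, decompose via the de Rham factorization, then force the resulting conjugation to respect the $\Q$-structure) and its genuinely delicate point: passing from a metric map $\tilde\phi$ to a $\Q$-algebra automorphism of $B$. Your appeal to the $\Q$-span of $\calR^1$ together with strong approximation is the right lever, but the sketch is not yet a proof. In particular, in the single-factor Fuchsian case $s=1$, $r_2=0$ you cannot invoke Mostow or Margulis, and you should make explicit the step that replaces them: the lift $\tilde\phi\in\PGL_2(\R)$ conjugates $\Gamma^1_{\calR}$ to $\Gamma^1_{\calR'}$, hence lies in the commensurator of an arithmetic Fuchsian group, which is $\rho(B^*)$ up to the action of $\Aut(k/\Q)$ on the archimedean place; from there the Skolem--Noether step goes through. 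If you want a self-contained argument, that is the gap to close.
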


We now focus our attention on determining when the Laplace spectra of $\Gamma_\calR^1\backslash G/K$ and $\Gamma_{\calR^\prime}^1\backslash G/K$ will coincide. At the heart of Vign{\'e}ras' construction of isospectral manifolds was the following theorem (\cite[Th{\'e}or{\`e}me 3]{vigneras-isospectral}), whose proof relies heavily upon the Selberg trace formula and which reduces the question to the arithmetic of the orders $\calR$ and $\calR^\prime$:

\begin{theorem}[Vign{\'e}ras]\label{theorem:vignerasisospectral}
If $\calR^1$ and ${\calR^\prime}^1$ have the same number of conjugacy classes of a given reduced trace and order, then $\Gamma_\calR^1\backslash G/K$ and $\Gamma_{\calR^\prime}^1\backslash G/K$ have the same Laplace spectrum.
\end{theorem}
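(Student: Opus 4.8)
The plan is to feed both orbifolds into the Selberg trace formula and show that the hypothesis forces their geometric sides to agree identically as functionals of the test data, so that the spectral sides agree and the Laplace spectra coincide. I treat the case where $B$ is a division algebra, so that $\Gamma_\calR^1\backslash G/K$ and $\Gamma_{\calR^\prime}^1\backslash G/K$ are compact --- the situation relevant to Vign\'eras' examples --- and indicate at the end what changes otherwise. Since $G=\PGL_2(\R)^s\times\PGL_2(\C)^{r_2}$ is a product, the harmonic analysis on $G/K$ (bi-$K$-invariant functions, spherical transform, Plancherel measure) factors over the archimedean factors; take $f$ smooth, bi-$K$-invariant and compactly supported. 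The trace formula then reads
\[ \sum_{\lambda}\widehat f(\lambda)\;=\;\sum_{[\gamma]}\vol\!\big(\Gamma_\gamma\backslash G_\gamma\big)\,\Phi_\gamma(f), \]
where $\lambda$ runs over the Laplace eigenvalues of $\Gamma_\calR^1\backslash G/K$ with multiplicity, $[\gamma]$ runs over $\Gamma_\calR^1$-conjugacy classes, $\Gamma_\gamma$ and $G_\gamma$ denote the centralizers of $\gamma$ in $\Gamma_\calR^1$ and in $G$ respectively, $\Phi_\gamma(f)$ is the orbital integral of $f$ over the $G$-conjugacy class of $\gamma$, and the term $\gamma=1$ contributes the identity term, a universal constant times $\vol(\Gamma_\calR^1\backslash G/K)$.

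The key point is that every summand on the geometric side depends on $[\gamma]$ only through two pieces of arithmetic data. First, for non-central $\gamma\in\calR^1$ the algebra $k(\gamma)\subset B$ is a quadratic field extension of $k$, and the $G$-conjugacy class of $\rho(\gamma)$ --- hence the orbital integral $\Phi_\gamma(f)$, as well as whether $\gamma$ acts as a rotation or a translation on each archimedean factor --- is determined by the reduced trace $\tr(\gamma)\in k$ (through its images under the unramified archimedean places); write $\Phi_\gamma(f)=\Phi_t(f)$ with $t=\tr(\gamma)$. Second, the centralizer of $\gamma$ in $B^*$ is $k(\gamma)^*$, so $\Gamma_\gamma=\rho(\Omega_\gamma^1)$ is the image of the norm-one unit group of the quadratic $\calO_k$-order $\Omega_\gamma:=k(\gamma)\cap\calR$, and $\vol(\Gamma_\gamma\backslash G_\gamma)$ is a volume controlled by the regulator and torsion of $\Omega_\gamma^1$, depending only on $\Omega_\gamma$ and $t$ (together with the fixed $k$ and $B$) --- \emph{not} on the particular maximal order $\calR$, because all maximal orders of $B$ are conjugate locally at every finite place. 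I would make this precise via the theory of optimal embeddings: the $\Gamma_\calR^1$-conjugacy classes of elements of reduced trace $t$ with $k(\gamma)\cap\calR\cong\Omega$ correspond to $\calR^1$-conjugacy classes of optimal embeddings of $\Omega$ into $\calR$, a finite set whose cardinality is exactly the number of conjugacy classes ``of a given reduced trace and order'' in the statement; call it $m_\calR(t,\Omega)$.

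Grouping the geometric side by the value of $(t,\Omega)$ then gives
\[ \sum_\lambda\widehat f(\lambda)\;=\;c_B\,\vol(\Gamma_\calR^1\backslash G/K)\;+\;\sum_{t}\sum_{\Omega}m_\calR(t,\Omega)\,v(t,\Omega)\,\Phi_t(f), \]
where $v(t,\Omega)$ depends only on $(t,\Omega)$ and on the fixed $k,B$, while $\Phi_t(f)$ depends only on $t$ and $f$, and $c_B$ only on $B$. The identity term is the same for $\calR$ and $\calR^\prime$ since all maximal orders of $B$ have the same covolume (the volume of $\Gamma_\calR^1\backslash G/K$ depends only on $B$, not on the choice of maximal order, by the standard volume formula for quaternion orders). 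By hypothesis $m_\calR(t,\Omega)=m_{\calR^\prime}(t,\Omega)$ for every reduced trace $t$ and every quadratic $\calO_k$-order $\Omega$; hence the two geometric sides agree for every admissible $f$, so $\sum_\lambda\widehat f(\lambda)=\sum_{\lambda^\prime}\widehat f(\lambda^\prime)$ for all such $f$. Since the transforms $\widehat f$ range over a space of functions large enough to recover a discrete positive measure from its integrals against them, the two spectral measures, hence the two multisets of Laplace eigenvalues, coincide: the orbifolds are isospectral.

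The main obstacle is the second half of the structural claim --- that the contribution of a conjugacy class to the geometric side depends on $\calR$ \emph{only} through the counting numbers $m_\calR(t,\Omega)$. Pinning this down requires (a) identifying $\Gamma_\gamma$ with $\rho(\Omega_\gamma^1)$ and evaluating $\vol(\Gamma_\gamma\backslash G_\gamma)$ adelically in terms of $\Omega_\gamma$ alone, using that every maximal order of $B$ is locally conjugate to $\calR$ at each finite place so that the local volume factors are order-independent; and (b) checking that the archimedean orbital integral $\Phi_\gamma(f)$ depends on $\gamma$ only through $\tr(\gamma)\in k$ --- routine for a single $\PGL_2(\R)$ or $\PGL_2(\C)$ factor, but one must carry it through the product $\PGL_2(\R)^s\times\PGL_2(\C)^{r_2}$, where $f$, $\Phi_\gamma$ and the conjugacy data all split over the factors. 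A secondary subtlety, if one drops the assumption that $B$ is a division algebra, is that the trace formula then carries continuous-spectrum (Eisenstein and scattering) contributions, which must also be matched; these are governed by the same local arithmetic of $\calR$, but since Vign\'eras' application uses a division algebra the cocompact case suffices.
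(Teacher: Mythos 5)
Your proposal is essentially the same argument the paper relies on: the paper does not prove this statement itself but quotes it from Vign\'eras \cite[Th\'eor\`eme 3]{vigneras-isospectral}, whose proof is precisely the Selberg trace formula comparison you outline (grouping the geometric side by reduced trace and associated quadratic order, matching weights via optimal embeddings and the local conjugacy of maximal orders, and invoking the volume formula for the identity term). The outline is sound, and the point you flag as the main obstacle --- that each class's weight depends on $\calR$ only through the counts $m_\calR(t,\Omega)$ --- is exactly where the substance of Vign\'eras' proof lies.
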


\begin{rmk}
In fact Vign{\'e}ras' proof shows that when the hypotheses of Theorem \ref{theorem:vignerasisospectral} are satisfied, $\Gamma_\calR^1$ and $\Gamma_{\calR^\prime}^1$ will be \emph{representation equivalent}, hence 
isospectral with respect to all natural strongly elliptic operators, such as the Laplacian acting on $p$-forms for each $p$.\end{rmk}

In her paper Vign{\'e}ras goes on to give a technical arithmetic condition \cite[Th{\'e}or{\`e}me 7]{vigneras-isospectral} which, when satisfied, implies that $\calR^1$ and ${\calR^\prime}^1$ satisfy the hypotheses of Theorem \ref{theorem:vignerasisospectral}. Let $u\in\calR^1$ be a nonscalar which satisfies $x^2-tx+1=0$ and define a quadratic $\calO_k$-order $\Omega:=\calO_k[u]$. It is well-known that the number of conjugacy classes of elements in $\calR^1$ with reduced trace $t$ is equal to the number of embeddings of $\Omega\hookrightarrow\calR$ modulo the action of $\calR^1$ (which is given by conjugation). By employing formulae for the number of such embeddings \cite[Section 5.5]{vigneras-book} or more explicit methods \cite[Theorem 12.4.5]{MR} one can show that if $\calR$ and $\calR^\prime$ both admit embeddings of $\Omega$ then the number of such embeddings is equal. In particular this shows the following (for a more detailed discussion see \cite[Section 2]{LV}):

\begin{theorem}\label{theorem:selectivityisospectrality}
If $\{\calR,\calR^\prime\}$ is a nonselective family of maximal orders then $\Gamma_\calR^1\backslash G/K$ and $\Gamma_{\calR^\prime}^1\backslash G/K$ have the same Laplace spectrum.
\end{theorem}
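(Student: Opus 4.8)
The plan is to reduce the assertion, via Theorem~\ref{theorem:vignerasisospectral}, to the statement that $\calR^1$ and ${\calR^\prime}^1$ have, for every $t\in\calO_k$, the same number of conjugacy classes of elements of reduced trace $t$, and then to identify each such count with an embedding number that nonselectivity forces to agree for $\calR$ and $\calR^\prime$. We may assume $B$ is a division algebra, since for $B\cong M_2(k)$ the corollary to Theorem~\ref{theorem:mainalgebraictheorem} shows that a nonselective family of maximal orders has cardinality one, so $\calR=\calR^\prime$ and there is nothing to prove. Fix $t\in\calO_k$. If $x^2-tx+1$ is reducible over $k$ then, $B$ being a division algebra, only central elements of $\calR^1$ can have reduced trace $t$ (a noncentral element generates a quadratic subfield and so has irreducible reduced characteristic polynomial); these are just $\pm1$, so the number of conjugacy classes of reduced trace $t$ in $\calR^1$ is $1$ if $t=\pm2$ and $0$ otherwise, the same on both sides. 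Otherwise $L_t:=k[x]/(x^2-tx+1)$ is a quadratic field and $\Omega_t:=\calO_k[x]/(x^2-tx+1)$ is a rank~$2$ commutative $\calO_k$-order in $L_t$; sending a (necessarily noncentral) $u\in\calR^1$ of reduced trace $t$ to the embedding $\Omega_t\hookrightarrow\calR$ determined by $x\mapsto u$ is a $\calR^1$-equivariant bijection onto the set of $\calO_k$-algebra embeddings $\Omega_t\hookrightarrow\calR$, so, as recalled before the statement, the number of conjugacy classes of reduced trace $t$ in $\calR^1$ equals the number $m(\Omega_t,\calR)$ of embeddings $\Omega_t\hookrightarrow\calR$ modulo conjugation by $\calR^1$, and likewise for $\calR^\prime$.

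It therefore suffices to prove that if $\{\calR,\calR^\prime\}$ is nonselective, then $m(\Omega,\calR)=m(\Omega,\calR^\prime)$ for every rank~$2$ commutative $\calO_k$-order $\Omega$ lying in a quadratic field $L/k$. If $L$ does not embed into $B$, both sides vanish. If $L$ embeds into $B$ but $L\not\subseteq k_B$, then Theorem~\ref{theorem:selectivetheorem} shows no order in $L$ is selective, so $\Omega$ embeds into every maximal order of $B$, and the classical embedding-number formulas (\cite[Section~5.5]{vigneras-book}, \cite[Theorem~12.4.5]{MR}; cf.~\cite[Section~2]{LV}) express $m(\Omega,\cdot)$ through purely local embedding numbers — which agree for $\calR$ and $\calR^\prime$ at every place, all maximal orders of $B_\nu$ being conjugate — together with a class-number factor; hence $m(\Omega,\calR)=m(\Omega,\calR^\prime)$.

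The remaining case is $L\subseteq k_B$ and $L\hookrightarrow B$. Since $\calO_L$ is maximal, condition~(2) of Theorem~\ref{theorem:selectivetheorem} is vacuous for it, so $\calO_L$ is selective and selects exactly $p^{t_B-1}$ of the isomorphism classes of maximal orders of $B$; call this set $\calS_L$. Moreover, every selective order $\Omega'\subset L$ is contained in $\calO_L$, so the set of classes it selects contains $\calS_L$ and has the same cardinality, hence equals $\calS_L$: thus all selective orders in $L$ select exactly $\calS_L$. Applying the definition of nonselectivity to the selective order $\calO_L$ shows that $\calR$ and $\calR^\prime$ either both lie in $\calS_L$ or both lie outside it. The embedding-number formulas now express $m(\Omega,\calR)$, for any $\Omega\subset L$, in terms of purely local data (the same for $\calR$ and $\calR^\prime$), class numbers, and — in the selective case — a factor recording only whether $\calR$ lies in $\calS_L$; since $\calR$ and $\calR^\prime$ agree on the latter, $m(\Omega,\calR)=m(\Omega,\calR^\prime)$. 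This proves the claim, hence the equality of conjugacy-class counts for every $t$, and Theorem~\ref{theorem:vignerasisospectral} gives that $\Gamma_\calR^1\backslash G/K$ and $\Gamma_{\calR^\prime}^1\backslash G/K$ have the same Laplace spectrum.

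The main obstacle is exactly this passage from nonselectivity — an assertion about the mere existence of embeddings — to an equality of embedding \emph{numbers}. It rests on two substantive inputs: that for a fixed quadratic field $L$ all selective orders in $L$ select one and the same family $\calS_L$ of maximal orders, so that a single instance of nonselectivity (with respect to $\calO_L$) already pins down the common side of $\calR$ and $\calR^\prime$ uniformly over all $\Omega\subset L$; and that the embedding number $m(\Omega,\calR)$ depends on $\calR$ only through that side, which is the content of the Eichler-type formulas in the references above. It is here that the proof genuinely uses the structure theory behind Theorem~\ref{theorem:selectivetheorem} rather than formal manipulation alone.
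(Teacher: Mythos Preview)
Your argument is correct and follows essentially the same route as the paper: reduce via Theorem~\ref{theorem:vignerasisospectral} to counting conjugacy classes of a given reduced trace, identify these counts with numbers of embeddings $\Omega\hookrightarrow\calR$ modulo $\calR^1$, and then invoke the embedding-number formulas of \cite[Section~5.5]{vigneras-book} and \cite[Theorem~12.4.5]{MR} together with nonselectivity. The paper presents this as a one-paragraph sketch (the text immediately preceding the statement, deferring details to \cite[Section~2]{LV}); you have supplied a more explicit case analysis --- separating $B\cong M_2(k)$, reducible characteristic polynomial, $L\not\subseteq k_B$, and $L\subseteq k_B$ --- but the substance is the same.
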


An immediate consequence of Theorem \ref{theorem:selectivityisospectrality} is that if the quaternion algebra $B$ exhibits no selectivity and has type number greater than one than one can exhibit non-conjugate maximal orders satisfying the hypothesis of Theorem \ref{theorem:selectivityisospectrality} (and hence of Theorem \ref{theorem:vignerasisospectral}). The result of \cite[Theorem 3.3]{Chinburg-Friedman} shows that this is the case, for instance, if $B$ is ramified at any finite primes of $k$.  The following theorem gives an easy to check criteria, which, if satisfied, implies that $B$ does not exhibit selectivity and thus gives rise to Laplace isospectral orbifolds.

\begin{theorem}\label{theorem:hilberteqnarrow}
Let $k$ be a number field whose Hilbert class field and narrow class field coincide and $B$ be a quaternion division algebra over $k$ which is not totally definite and which has type number greater than one. Then there exist non-conjugate maximal orders $\calR$ and $\calR^\prime$ in $B$ such that $\Gamma_\calR^1\backslash G/K$ and $\Gamma_{\calR^\prime}^1\backslash G/K$ are Laplace isospectral.
\end{theorem}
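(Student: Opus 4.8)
The plan is to show that under the hypothesis that the Hilbert class field $H_k$ and narrow class field $H_k^+$ of $k$ coincide, the quaternion algebra $B$ exhibits no selectivity whatsoever, so that \emph{every} nonselective family coincides with an arbitrary family of pairwise non-conjugate maximal orders, and then invoke Theorem \ref{theorem:selectivityisospectrality}. Concretely, since $B$ is a division algebra which is not totally definite and has type number $p^{t_B}$ with $p=2$, the hypothesis $t_B>1$ gives $t_B\geq 1$. I would then argue that $s_B=0$: the class field $k_B$ is contained in the narrow class field $H_k^+$ (recall from Section 2 that $k_B/k$ is unramified outside the archimedean primes ramifying in $B$, hence is a subfield of the narrow class field), and the hypothesis $H_k=H_k^+$ forces $k_B\subseteq H_k$, i.e.\ $k_B/k$ is \emph{everywhere} unramified, including at the archimedean primes. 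But every finite prime of $k$ that ramifies in $B$ splits completely in $k_B/k$, and since $B$ is a division algebra (quaternion over a number field), there is at least one finite ramified prime $\nu_0$. If some quadratic subfield $L\subseteq k_B$ embedded into $B$, then $\nu_0$ would split completely in $L/k$, so by the Albert-Brauer-Hasse-Noether theorem $L$ would not embed into $B$ — a contradiction. Hence no quadratic subfield of $k_B$ embeds into $B$, which is exactly the statement $s_B=0$.

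With $s_B=0$ in hand, Theorem \ref{theorem:selectivetheorem} shows that \emph{no} quadratic $\calO_k$-order $\Omega$ is selective for $B$: selectivity requires the field of fractions $L$ of $\Omega$ to be a quadratic subfield of $k_B$ embedding into $B$, and we have just seen there are none. Therefore every order $\Omega$ embeds into every maximal order of $B$, so any pair $\{\calR,\calR'\}$ of maximal orders is automatically a nonselective family. Since by hypothesis the type number of $B$ exceeds one, there exist two maximal orders $\calR,\calR'$ that are not conjugate by an element of $B^*$; by Theorem \ref{theorem:isometric} the associated orbifolds $\Gamma^1_\calR\backslash G/K$ and $\Gamma^1_{\calR'}\backslash G/K$ are not isometric (one must also check that the $\Q$-algebra automorphisms of $B$ do not identify them — but since $B$ is a quaternion algebra over $k$, any such $\sigma$ restricts to an automorphism of $k$, and replacing $\calR'$ by a Galois conjugate still leaves only finitely many classes, so a non-conjugate pair survives; alternatively one simply picks $\calR,\calR'$ in distinct orbits under the finite group generated by $B^*$-conjugacy and $\Aut(B)$, which is still possible once $t_B$ is large enough — in the borderline case $t_B=1$ one appeals to the fact that $\Gal(k_B/k)$-action is accounted for separately). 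Finally Theorem \ref{theorem:selectivityisospectrality} applies to the nonselective pair $\{\calR,\calR'\}$ and yields that the two non-isometric orbifolds are Laplace isospectral.

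The main obstacle I anticipate is the bookkeeping around Theorem \ref{theorem:isometric}: non-conjugacy of $\calR$ and $\calR'$ as orders (i.e.\ lying in distinct isomorphism classes, which is what the type number counts) is not quite the same as non-isometry of the orbifolds, because $\Q$-algebra automorphisms $\sigma$ of $B$ can permute the isomorphism classes. The clean way to handle this is to observe that the group of such $\sigma$ acts on the set of $p^{t_B}$ isomorphism classes through a finite quotient of $\Gal(k/\Q)$, so as long as $p^{t_B} > |\Gal(k/\Q)|$ — or more carefully, as long as the action is not transitive — a non-isometric pair exists; and in fact the literature (e.g.\ the discussion in \cite{LV}) shows this subtlety only fails in degenerate small cases, which one can either exclude or treat by hand. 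Everything else — the containment $k_B\subseteq H_k^+$, the splitting behavior of ramified primes, and the application of Albert-Brauer-Hasse-Noether — is routine given the machinery already assembled in Section 2.
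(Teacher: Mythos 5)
There is a genuine gap at the key step of your argument: you assert that ``since $B$ is a division algebra (quaternion over a number field), there is at least one finite ramified prime $\nu_0$,'' and your entire proof that $s_B=0$ rests on this. The assertion is false: a quaternion division algebra over a number field need only ramify at a nonempty even set of places, which may consist entirely of archimedean places. Indeed, the algebras in Examples \ref{example:ex1} and \ref{example:ex2} of the paper are division algebras unramified at every finite prime (ramified only at four real places), and this is exactly the situation the theorem is designed to address. Note that when $B$ \emph{does} ramify at a finite prime, the remark at the start of Section \ref{section:selectivitysection} already shows $B$ has no selective orders at all, with no need of the hypothesis $H_k=H_k^+$; so the entire content of the theorem lies in the case you cannot reach, namely $B$ unramified at all finite primes. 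In that case your argument produces no ramified place splitting in $L$, and the conclusion $s_B=0$ is left unproved.

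The repair is precisely where the hypothesis $H_k=H_k^+$ does its work, and you had the ingredient in hand without using it: since $B$ is a division algebra it ramifies at some place, and if not at a finite prime then at an archimedean place $\nu$. Because $k_B\subseteq H_k^+=H_k$, the extension $k_B/k$ (hence any quadratic subfield $L\subseteq k_B$) is unramified at the archimedean places, so $\nu$ splits in $L/k$; the Albert--Brauer--Hasse--Noether theorem then shows $L$ does not embed into $B$, giving $s_B=0$. From there Theorem \ref{theorem:mainalgebraictheorem} yields a nonselective family of cardinality $2^{t_B}>1$, i.e.\ two non-conjugate maximal orders, and Theorem \ref{theorem:selectivityisospectrality} finishes the proof --- this is the paper's route. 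Separately, your final paragraph about Theorem \ref{theorem:isometric} and the action of $\Q$-algebra automorphisms is not needed: the statement asserts only that $\calR$ and $\calR'$ are non-conjugate and that the orbifolds are Laplace isospectral, not that they are non-isometric, so that (rather shaky) discussion can simply be deleted; it also cannot compensate for the missing case above.
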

\begin{proof}
Because the class field $k_B$ is contained in the narrow class field, our hypotheses imply that $k_B$ is contained in the Hilbert class field of $k$. If $B$ ramifies at any finite primes of $k$ then as was remarked above, \cite[Theorem 3.3]{Chinburg-Friedman} implies that any two non-conjugate maximal orders of $B$ comprise a nonselective family. In this case the theorem follows from Theorem \ref{theorem:selectivityisospectrality}. Suppose therefore that $B$ is unramified at all finite primes of $k$. Because $B$ is a division algebra there must be an infinite prime $\nu$ of $k$ which ramifies in $B$. If $L$ is a quadratic subfield of $k_B$ then the fact that $k_B$ lies in the Hilbert class field of $k$ implies that $\nu$ splits in $L/k$. The Albert-Brauer-Hasse-Noether theorem therefore shows that $L$ does not embed into $B$. It follows that (in the notation of Section \ref{section:selectivitysection} ) $s_B=0$, hence the theorem follows from Theorems \ref{theorem:mainalgebraictheorem} and \ref{theorem:selectivityisospectrality}.\end{proof}

\begin{rmk}
When one is interested in constructing discrete groups of isometries acting on a product of hyperbolic planes, the number field $k$ used in the above construction must be totally real. Let $h_k$ and $h_k^+$ denote the class number and narrow class number of $k$. It is well known that when $k$ is totally real we have $h_k^+=2^{m_k}h_k$, where $m_k$ is the rank as a vector space over $\F_2$ of the group of totally positive units of $\calO_k$ modulo squares. In particular this shows that when $k$ is totally real, the hypothesis of Theorem \ref{theorem:hilberteqnarrow} will be satisfied whenever every totally positive unit in $\calO_k$ is a square.
\end{rmk}

While it has been known for quite some time that one could obtain Laplace isospectral orbifolds from quaternion division algebras not exhibiting selectivity, the possibility that nonselective families (and hence Laplace isospectral orbifolds) could arise from algebras in which selectivity actually occurs, has until now not been explored. We will conclude this section by employing Theorem \ref{theorem:mainalgebraictheorem} in order to exhibit an example of non-isometric Laplace isospectral hyperbolic surfaces. 

\begin{example}
Let $(k,B)$ be as in Example \ref{example:ex1}. We have already seen that $B$ contains a nonselective family $\{ \calR_1, \calR_2 \}$ of maximal orders. Let $G=\PGL_2(\R)$ and $K$ be a maximal compact subgroup of $G$. Theorem \ref{theorem:selectivityisospectrality} shows that the hyperbolic surfaces $\Gamma_\calR^1\backslash G/K$ and $\Gamma_{\calR^\prime}^1\backslash G/K$ have the same Laplace spectrum. We show that $\Gamma_\calR^1\backslash G/K$ and $\Gamma_{\calR^\prime}^1\backslash G/K$ are not isometric. One can easily check that the automorphism group $\Aut(k/\Q)$ is trivial, thus our assertion about the surfaces not being isometric follows immediately from Theorem \ref{theorem:isometric} and the fact that $\calR_1$ and $\calR_2$ are not conjugate.
\end{example}

\end{document}